\def\namedlabel#1#2{\begingroup
    #2%
    \def\@currentlabel{#2}%
    \phantomsection\label{#1}\endgroup
}
\numberwithin{equation}{section} 
\newtheorem{thm}{Theorem}[section]
\newtheorem{lem}[thm]{Lemma}
\newtheorem{lem-app}{Lemma}
\newtheorem*{lem*}{Lemma}
\newtheorem*{prop*}{Proposition}
\theoremstyle{remark}
\newtheorem{rem}[thm]{Remark}
\theoremstyle{definition} 
\newtheorem{defi}[thm]{Definition}
\newtheorem*{nota*}{Notation}
\newcommand{\Q}{\overline{\mathbb Q}}
\newcommand{\C}{\mathbb{C}}
\newcommand{\Z}{\mathbb{Z}}
\newcommand{\I}{\mathcal I}
\newcommand{\J}{\mathcal J}
\newcommand{\lambd}{{\boldsymbol{\lambda}}}
\newcommand{\f}{{\boldsymbol f}}
 \renewcommand{\k}{{\boldsymbol{k}}}
\newcommand{\X}{\boldsymbol{X}}
\newcommand{\Y}{\boldsymbol{Y}}
\newcommand{\bmu}{{\boldsymbol{\mu}}}
\newcommand{\bnu}{{\boldsymbol{\nu}}}
\newcommand{\bphi}{{\boldsymbol{\phi}}}
\newcommand{\btau}{{\boldsymbol{\tau}}}
\newcommand{\bTheta}{{\boldsymbol{\Theta}}}
\title[A new proof of Nishioka's theorem]{A  new proof of Nishioka's theorem in Mahler's method}
\author{Boris Adamczewski}
\address{
Univ Lyon, Universit\'e Claude Bernard Lyon 1\\
 CNRS UMR 5208, Institut Camille Jordan \\
 F-69622 Villeurbanne Cedex, France}
\email{Boris.Adamczewski@math.cnrs.fr}
\author{Colin Faverjon}
\address{
Univ Lyon, Universit\'e Claude Bernard Lyon 1\\
 CNRS UMR 5208, Institut Camille Jordan \\
 F-69622 Villeurbanne Cedex, France}
\email{faverjon@math.univ-lyon1.fr}
\date{}
\thanks{ }
\begin{document}

\begin{abstract}
In a recent work \cite{AF20}, the authors established new results about general linear Mahler systems in several variables 
from the perspective of transcendental number theory, such as a multivariate extension of Nishioka's theorem. 
Working with functions of several variables and with different Mahler transformations leads to a number of complications, including the need to prove a general vanishing theorem and to use tools from ergodic Ramsey theory and Diophantine approximation (\emph{e.g.}, 
a variant of the $p$-adic Schmidt subspace theorem).   
These complications make the proof of the main results proved in \cite{AF20} rather intricate.  
In this article,  we describe our new approach in the special case of linear Mahler systems 
in one variable. This leads to a new, elementary, and self-contained proof of Nishioka's theorem, as well as of the lifting theorem more recently obtained by Philippon \cite{PPH} and the authors \cite{AF17}. Though the general strategy remains the same as in \cite{AF20}, the proof turns out to be greatly simplified. Beyond its own interest, we hope that reading this article will facilitate the understanding of the proof of the main results obtained in \cite{AF20}. 
\end{abstract}
\bibliographystyle{abbvr}
\maketitle

\section{Introduction}

Throughout this paper, we let  $q\geq 2$ denote a fixed integer. An \emph{$M_q$-function} is a power series $f(z)\in\Q[[z]]$ satisfying 
a linear equation of the form
$$
p_0(z)f(z) + p_1(z)f(z^q)+\cdots +p_m(z)f(z^{q^m})=0\,,
$$
where $p_0(z),\ldots,p_m(z)\in\Q[z]$ are not all zero. 
In the study of $M_q$-functions, it is often more convenient to consider, instead of linear Mahler equations, linear systems  of functional equations of the form   
\begin{equation}\label{eq:P2-MahlerSystem}
\left(\begin{array}{c}f_1(z) \\ \vdots \\ f_m(z)\end{array}\right)
=
A(z)
\left(\begin{array}{c}f_1(z^q) \\ \vdots \\ f_m(z^q)\end{array}\right)\,, 
\end{equation}
where  $A(z) \in {\rm GL}_m(\Q(z))$ and $f_1(z),\ldots,f_m(z)\in\Q[[z]]$. Then, each power series $f_i(z)$ is an $M_q$-function.  We recall that an $M_q$-function 
is meromorphic in the open unit disc of $\mathbb C$ (see, for instance, \cite[Th\'eor\`eme 31]{Du}). Furthermore, it admits the unit circle as a natural boundary, unless it is a rational function \cite[Th\'eor\`eme 4.3]{Ra92}. 
A point $\alpha\in\mathbb C$ is said to be 
\emph{regular} with respect to \eqref{eq:P2-MahlerSystem} if the matrix $A(\alpha^{q^k})$ is both well-defined and invertible 
for all integers $k\geq 0$. 

In this framework, the main aim of Mahler's method is to transfer results about the absence of algebraic
(resp.\ linear) relations between the functions $f_1(z),\ldots,f_m(z)$ over $\Q(z)$ to the absence
of algebraic (resp.\ linear) relations over $\Q$ between their values at non-zero algebraic points lying in the open unit disc (assuming, of course, that these values are well-defined). 
In 1990, Ku.\ Nishioka \cite{Ni90} proved the following theorem, which is the analog of the Siegel-Shidlovskii theorem 
in the theory of Siegel $E$-functions (see \cite{Sh_Liv}).  
Given a field $\mathbb K$, a field extension $\mathbb L$ of $\mathbb K$, and elements 
$a_1,\ldots,a_m$ in $\mathbb L$, 
we let ${\rm tr.deg}_{\mathbb K}(a_1,\ldots,a_m)$ denote the transcendence degree over 
$\mathbb K$ of the field extension $\mathbb K(a_1,\ldots,a_m)$. 

\begin{thm}[Nishioka's theorem]\label{thm:P2-Nishioka}
Let $f_1(z),\ldots,f_m(z)$ be $M_q$-functions related by a $q$-Mahler system of the form \eqref{eq:P2-MahlerSystem} and let $\alpha \in \Q$, 
$0< \vert \alpha \vert < 1$, be regular with respect to this system. Then
$$
{\rm tr.deg}_{\Q}(f_1(\alpha),\ldots,f_m(\alpha)) = {\rm tr.deg}_{\Q(z)}(f_1(z),\ldots,f_m(z))\, .
$$
\end{thm}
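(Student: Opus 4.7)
The argument splits into two inequalities. The upper bound
$$
{\rm tr.deg}_{\Q}(f_1(\alpha),\ldots,f_m(\alpha)) \leq {\rm tr.deg}_{\Q(z)}(f_1(z),\ldots,f_m(z))
$$
is elementary: any $\Q(z)$-algebraic relation among the $f_i(z)$, after clearing denominators to a polynomial in $\Q[z,x_1,\ldots,x_m]$, specializes at the regular point $z=\alpha$ and, after removing any common factor $(z-\alpha)$ from its coefficients, yields a nontrivial $\Q$-algebraic relation on the values. My plan for the reverse inequality is to establish the stronger \emph{lifting theorem}: every $P \in \Q[x_1,\ldots,x_m]$ with $P(f_1(\alpha),\ldots,f_m(\alpha))=0$ is the specialization at $z=\alpha$ of some nonzero $\tilde P \in \Q[z,x_1,\ldots,x_m]$ satisfying $\tilde P(z,f_1(z),\ldots,f_m(z))\equiv 0$. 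Nishioka's equality then follows by considering a relation of minimal degree.

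The strategy is the classical auxiliary-function approach of transcendence theory, taking advantage of the fact that the Mahler dynamics $z\mapsto z^q$ contracts rapidly toward $0$ in the open unit disc. First, via Siegel's lemma, I would construct, for large parameters $N$ and $T$, a nonzero polynomial $Q\in\Z[z][x_1,\ldots,x_m]$ of degree $\leq N$ in each $x_i$ with controlled $z$-degree and coefficient height, such that the auxiliary series $F(z) := Q(z,f_1(z),\ldots,f_m(z))$ vanishes to order at least $T$ at $z=0$. Second, I would evaluate along the orbit $(\alpha^{q^k})_{k\geq 0}$: the analytic bound gives $|F(\alpha^{q^k})|\leq C_1\,|\alpha|^{Tq^k}$, while iterating the Mahler system
$$
(f_1(\alpha),\ldots,f_m(\alpha))^{\top} = A(\alpha)A(\alpha^q)\cdots A(\alpha^{q^{k-1}})\,(f_1(\alpha^{q^k}),\ldots,f_m(\alpha^{q^k}))^{\top}
$$
together with regularity of $\alpha$ expresses $F(\alpha^{q^k})$ as a polynomial in $f_1(\alpha),\ldots,f_m(\alpha)$ with algebraic coefficients of height at most $C_2^{q^k}$. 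A Liouville-type lower bound (applied, after a specialization argument, on the transcendence basis extracted from the values) then forces $F(\alpha^{q^k})=0$ for every sufficiently large $k$, provided $T$ has been chosen large enough relative to $N$. Third, since the points $\alpha^{q^k}$ accumulate at $0$ and $F(z)$ is meromorphic in the open unit disc, these infinitely many zeros force $F\equiv 0$, producing the desired functional identity $Q(z,f_1(z),\ldots,f_m(z))\equiv 0$.

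The main obstacle is twofold. On the arithmetic side, the Liouville estimate demands that the coefficients arising from the iterated matrix product $A(\alpha)\cdots A(\alpha^{q^{k-1}})$ have controlled height and denominators inside a number field of bounded degree; one has to track these uniformly in $k$, leaning on the regularity hypothesis to tame the denominators, and the estimate must be robust enough to handle the possibility that some $f_i(\alpha)$ are transcendental (so that the bound is really applied to a resultant against a chosen transcendence basis). The more delicate issue, however, is that once $Q(z,f_1(z),\ldots,f_m(z))\equiv 0$ is extracted by the zero-counting step, one must still argue that $Q(\alpha,x_1,\ldots,x_m)$ is nonzero and genuinely encodes the original $P$; otherwise the lifting statement would be vacuous. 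Nishioka's original proof bypasses this with a Nevanlinna-theoretic vanishing theorem for polynomials in $M_q$-functions, and I would expect the elementary one-variable treatment advertised here to replace that analytic input with an induction on the number of functions: peeling off one $f_i$ at a time and invoking the minimality of $P$ at each step to ensure that every auxiliary polynomial constructed retains nontrivial information about the starting relation.
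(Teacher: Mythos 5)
Your high-level architecture — prove a lifting theorem, then deduce Nishioka's equality by comparing Hilbert functions/dimensions — does match the paper, and your description of the trivial direction agrees with it. But the auxiliary-function construction you sketch is essentially the classical Mahler--Kubota--Loxton--van der Poorten scheme, not the paper's new argument, and it leaves the two obstacles you yourself flag unresolved in a way that your own plan cannot fix.

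First, your auxiliary polynomial $Q(z,x_1,\ldots,x_m)$ is built solely to force high vanishing of $Q(z,f_1(z),\ldots,f_m(z))$ at $z=0$; nothing in its construction refers to the relation $P$ you want to lift. So even when you conclude $Q(z,f(z))\equiv 0$, there is no mechanism by which $Q(\alpha,\cdot)$ should recover $P$, and your ``induction on the number of functions'' is speculation rather than a mechanism. The paper avoids this from the outset: the objects fed into the Siegel-lemma step are powers of the linear form $F(\Y,z)=\sum_{i,j}\tau_i y_{i,j}f_j(z)$, whose coefficients $\tau_i$ are precisely those of the given relation $L$; the matrix indeterminates $\Y$ and the iteration $F(\Y,z)=F(\Y A_k(z),z^{q^k})$ are what carry $L$ through the whole argument so that the eventual functional identity specializes back to $L$ at $z=\alpha$.

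Second, and more fundamentally, your Liouville step is applied to a quantity expressed as ``a polynomial in $f_1(\alpha),\ldots,f_m(\alpha)$ with algebraic coefficients,'' and you acknowledge that some of these values may be transcendental, proposing a ``specialization/resultant'' argument to rescue it. That rescue is exactly the Nesterenko-style elimination theory that the paper is designed to eliminate. The paper's central device — the relation matrix $\bphi(z)$, produced by Hilbert's Nullstellensatz applied to the radical ideal $\mathcal I=\{P\in\Q(z)[\Y]:P(A_k(\alpha),\alpha^{q^k})=0\ \forall k\gg1\}$, together with the matrices $\bTheta_k(z)$ built from it — reorganizes the whole estimate so that the quantity handed to Liouville is $\mathfrak E(A_k(\alpha),\alpha^{q^k})=P_{v_0}(A_k(\alpha),\alpha^{q^k})$, a polynomial in the genuinely \emph{algebraic} numbers $A_k(\alpha)$ and $\alpha^{q^k}$ living in a fixed number field, with $P_{v_0}\notin\mathcal I$ guaranteeing non-vanishing for infinitely many $k$. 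Transcendence of the $f_i(\alpha)$ simply never enters. Your sketch contains neither the Nullstellensatz input, nor the ideal $\mathcal I$ and its complement $\mathcal I^\perp$ controlling the auxiliary polynomial, nor any treatment of non-vanishing — the three ingredients that make the paper's proof both elementary and a genuine proof of the lifting statement. Finally, the passage from linear to general homogeneous relations in the paper is handled via Kronecker powers $A(z)^{\otimes d}$, not an induction on the number of functions.
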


Nishioka's theorem is undoubtedly a landmark result in Mahler's method, but it also suffers from some limitation which prevent it to cover important applications (see the discussion in Sections 1 and 2 of \cite{AF17} and also the results in \cite{AF18}).  For such applications, the following refinement of Nishioka's theorem, which we called \emph{lifting theorem} (or \emph{th\'eor\`eme de permanence} in French), is needed.

\begin{thm}[Lifting theorem]\label{thm:P2-Philippon}
Let $f_1(z),\ldots,f_m(z)$ be $M_q$-functions related by a $q$-Mahler system of the form \eqref{eq:P2-MahlerSystem} and let $\alpha \in \Q$, 
$0< \vert \alpha \vert < 1$, be regular with respect to this system. Then for any homogenous polynomial $P \in \Q[X_1,\ldots,X_m]$ such that
$$
P(f_{1}(\alpha),\ldots,f_{m}(\alpha)) = 0\,,
$$
there exists a polynomial $\overline{P}\in \Q[z,X_1,\ldots,X_m]$, homogeneous in $X_1,\ldots,X_m$, such that 
\begin{eqnarray*}
\overline{P}(z,f_{1}(z),\ldots,f_{m}(z)) = 0 & \mbox{ and } &
\overline{P}(\alpha,X_1,\ldots,X_m)=P(X_1,\ldots,X_m).
\end{eqnarray*}
\end{thm}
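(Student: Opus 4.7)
The plan is to deduce the theorem from Nishioka's theorem (Theorem \ref{thm:P2-Nishioka}) in two stages: first a reduction to the case of a linear form by passing to a symmetric power of the Mahler system, then a module-theoretic specialization argument.

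\textbf{Reduction to degree one.} Let $d = \deg P$ and let $\{M_I(X)\}_{|I|=d}$ denote the $N = \binom{m+d-1}{d}$ monomials of degree $d$ in $X_1,\ldots,X_m$. The functions $g_I(z) := M_I(f_1(z),\ldots,f_m(z))$ are $M_q$-functions satisfying the $q$-Mahler system whose matrix is the symmetric power $\mathrm{Sym}^d A(z) \in \mathrm{GL}_N(\Q(z))$, and $\alpha$ remains regular for this new system. Writing $P = \sum_I c_I M_I$ with $c_I \in \Q$, the hypothesis $P(f(\alpha)) = 0$ becomes the linear relation $\sum_I c_I g_I(\alpha) = 0$. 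If one can produce $c_I(z) \in \Q[z]$ with $c_I(\alpha) = c_I$ and $\sum_I c_I(z) g_I(z) = 0$, then $\overline{P}(z,X) := \sum_I c_I(z) M_I(X)$ satisfies the requirements. Hence it suffices to prove the theorem for linear forms in an arbitrary $q$-Mahler system.

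\textbf{Module argument.} Given such a system $(h_1,\ldots,h_N)$ and a relation $\sum_i c_i h_i(\alpha) = 0$ with $c_i \in \Q$, consider the $\Q[z]$-module
\[
\M := \Bigl\{(b_1(z),\ldots,b_N(z)) \in \Q[z]^N \;:\; \textstyle\sum_{i=1}^N b_i(z) h_i(z) = 0 \Bigr\}.
\]
As a torsion-free submodule of the free module $\Q[z]^N$ over the PID $\Q[z]$, $\M$ is itself free. Clearing denominators identifies $\M \otimes_{\Q[z]} \Q(z)$ with the space $V_z$ of $\Q(z)$-linear relations among the $h_i(z)$, so $\mathrm{rank}(\M) = \dim_{\Q(z)} V_z =: n$. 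The specialization map $\phi_\alpha : \M \to \Q^N$, $(b_i(z)) \mapsto (b_i(\alpha))$, takes values in $V_\alpha := \{(b_i) \in \Q^N : \sum_i b_i h_i(\alpha) = 0\}$. Its kernel is exactly $(z-\alpha)\M$: indeed, if $b_i(\alpha) = 0$ for every $i$, then writing $b_i(z) = (z-\alpha) b'_i(z)$, the identity $(z-\alpha)\sum_i b'_i(z) h_i(z) = 0$ forces $(b'_i) \in \M$. Hence $\phi_\alpha$ descends to an injection $\M/(z-\alpha)\M \hookrightarrow V_\alpha$ whose image is a $\Q$-subspace of dimension exactly $n$.

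\textbf{The main obstacle.} The required lift thus exists if and only if this injection is surjective, equivalently
\[
\dim_{\Q} \mathrm{span}_{\Q}\bigl(h_1(\alpha),\ldots,h_N(\alpha)\bigr) = \dim_{\Q(z)} \mathrm{span}_{\Q(z)}\bigl(h_1(z),\ldots,h_N(z)\bigr).
\]
The inequality $\leq$ is elementary: any $\Q(z)$-linear relation specializes to a $\Q$-linear relation at $\alpha$ after clearing denominators and, if needed, dividing out by the largest power of $(z-\alpha)$ so that not all coefficients vanish. The reverse inequality is the linear (``multiplicity one'') version of Nishioka's theorem, and this is where I expect the main difficulty. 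Because the values $h_i(\alpha) \in \mathbb C$ are generically transcendental, their $\Q$-linear independence is itself a transcendence statement. The plan is to extract it from Theorem \ref{thm:P2-Nishioka} by invoking it on carefully chosen subsystems built from monomials in the $h_i$: since the first step of the proof has already produced a whole hierarchy of symmetric-power Mahler systems, the equality of transcendence degrees provided by Nishioka can be pushed through the Hilbert-type data of these systems to force the required equality of linear dimensions.
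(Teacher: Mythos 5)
Your proposal runs in the opposite direction from the paper. The paper proves a standalone linear lifting statement (Theorem~\ref{thm:P2-liftinglinear}) from scratch — via relation matrices, an auxiliary function, and a Liouville-type lower bound — then obtains the homogeneous case by passing to Kronecker powers, and finally \emph{deduces} Nishioka's theorem from the lifting theorem. You instead try to deduce the lifting theorem \emph{from} Nishioka's theorem. Your reduction to degree one via symmetric powers is fine and is the same idea as the paper's Kronecker-power reduction, and your module-theoretic observation — that a lift exists for every $\Q$-linear relation at $\alpha$ if and only if
\begin{equation*}
\dim_{\Q}\mathrm{span}_{\Q}\bigl(h_1(\alpha),\ldots,h_N(\alpha)\bigr) \;=\; \dim_{\Q(z)}\mathrm{span}_{\Q(z)}\bigl(h_1(z),\ldots,h_N(z)\bigr)
\end{equation*}
— is correct and cleanly presented. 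But that equality \emph{is} the linear lifting theorem, so you have not advanced, only reformulated. The remaining step, extracting this equality of \emph{linear} dimensions from Nishioka's equality of \emph{transcendence degrees}, is precisely the hard transcendence step, and your last paragraph only gestures at it ("push Nishioka through the Hilbert-type data of the symmetric-power systems") without giving an argument.

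This gap is not cosmetic. Nishioka's theorem tells you that the Hilbert functions $\varphi_\alpha(d)$ and $\varphi_z(d)$ have the same degree as polynomials in $d$; it says nothing about their leading coefficients, let alone about their values at a fixed $d$, which is what the displayed equality demands. The elementary inequality $\varphi_\alpha(d)\leq\varphi_z(d)$ is your clearing-denominators remark, but the reverse inequality at finite $d$ is strictly stronger than the asymptotic degree information and is exactly the content of the lifting theorem. The paper's introduction notes that deductions in the direction you want (Nishioka implies lifting) do exist in the literature, but they rely on a full Galois theory of linear difference equations; no module-theoretic shortcut of the kind you sketch is known. As written, the transcendence core of the theorem — played in the paper by the relation matrices of Section~\ref{sec:P2-matrix} and the auxiliary-function argument of Section~\ref{subsec:klemma} — is missing from your proposal.
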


Again, Theorem~\ref{thm:P2-Philippon} has an analog in the theory of $E$-functions: the lifting theorem proved by Beukers \cite{Be06} using Andr\'e's theory of arithmetic Gevrey series \cite{An1,An2}.  
A slightly weaker version of Theorem~\ref{thm:P2-Philippon} was first proved by Philippon \cite{PPH}.  Theorem~\ref{thm:P2-Philippon} was then deduced in  \cite{AF17} 
from Philippon's lifting theorem.  In \cite{AF17,PPH}, the lifting theorem is derived from Nishioka's theorem. Thanks to the work of Andr\'e \cite{An3}, pursued by Naguy and Szamuely \cite{NS20}, we now have a general 
approach based on a suitable Galois theory of linear differential and difference equations that allows one to deduce theorems of the type of Theorem~\ref{thm:P2-Philippon} from theorems of the type of 
Theorem~\ref{thm:P2-Nishioka}. 

The proof of Nishioka's theorem deeply relies on tools from commutative algebra, related to elimination theory, which were introduced and developed by Nesterenko  in the framework of transcendental number theory at the end of the 1970s (see, for instance, \cite{Ne77}). Recently, Fernandes \cite{Fe18} observed that Nishioka's theorem can also be  derived from a general 
algebraic independence criterion due to Philippon \cite{PPH1}. However, Philippon's criterion is also based on the same tools, so that, in the end, both proofs rely on the same argument. The proof of Nishioka's theorem has the advantage that it can be quantified (see, for instance, \cite{Ni90}), 
leading to algebraic independence measures. Its main deficiency is that it can hardly be generalised to Mahler systems in several variables.

In this note, we use the approach recently introduced by the authors \cite{AF20} to provide new and more elementary proofs of both Nishioka's theorem and the lifting theorem. 
This approach takes its roots in the original one initiated by Mahler \cite{Ma30b} and developed much later by Kubota \cite{Ku77}, Loxton and van der Poorten \cite{LvdP82}, and Nishioka \cite{Ni94,Ni96}. The main improvement comes from the introduction of the so-called \emph{relation matrices} whose existence is ensured by Hilbert Nullstellensatz. 
In contrast with \cite{AF17,PPH}, we first prove the lifting theorem and then deduce Nishioka's theorem by using a classical 
argument, as in Shidlovskii's proof of the Siegel-Shidlovskii theorem (see \cite{Sh_Liv} or \cite{FN}). Beyond its elementary aspect, this new approach has the great advantage of being generalisable within the framework of Mahler's method in several variables, as has been done in \cite{AF20}. We hope that reading first 
this article will facilitate the understanding of the proof of the main results in \cite{AF20}. 
In order to avoid the proofs of  Theorems \ref{thm:P2-Nishioka} and \ref{thm:P2-Philippon}  being buried in tedious computations, we occasionally just outline the main argument and provide more detail in the appendix at the end of the paper. 
There, we also prove some auxiliary results 
that can be used to make our proofs of Theorems \ref{thm:P2-Nishioka} and \ref{thm:P2-Philippon} as elementary and self-contained as possible.


\section{Lifting the linear relations}\label{sec:P2-LiftingLinear}

We first prove Theorem \ref{thm:P2-Philippon} in the particular case of linear relations.

\begin{thm}\label{thm:P2-liftinglinear}
Let $f_1(z),\ldots,f_m(z)$ be $M_q$-functions related by a $q$-Mahler system of the form \eqref{eq:P2-MahlerSystem}, and let $\alpha \in \Q$, $0 < \vert \alpha \vert < 1$, be regular with respect to this system. Let $L \in \Q[X_1,\ldots,X_m]$ be a linear form such that
$$
L(f_{1}(\alpha),\ldots,f_{m}(\alpha)) = 0\, .
$$
Then, there exists $\overline{L}\in \Q[z,X_1,\ldots,X_m]$, linear in $X_1,\ldots,X_m$, such that 
\begin{eqnarray*}
\overline{L}(z,f_{1}(z),\ldots,f_{m}(z)) = 0 & \mbox{ and } &
\overline{L}(\alpha,X_1,\ldots,X_m)=L(X_1,\ldots,X_m)\,.
\end{eqnarray*}
\end{thm}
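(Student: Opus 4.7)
Let $d := \dim_{\Q(z)}\langle f_1,\ldots,f_m\rangle$ and set $r := m-d$. The plan is to encode all polynomial linear relations among the $f_i(z)$ into a single \emph{relation matrix} and then show that every $\Q$-linear relation among the $f_i(\alpha)$ arises as a specialisation. Since $\Q[z]$ is a principal ideal domain, the $\Q[z]$-module $\R$ of coefficient vectors $(c_1(z),\ldots,c_m(z))\in\Q[z]^m$ with $\sum_i c_i(z)f_i(z)=0$ is free of rank $r$. I would fix a $\Q[z]$-basis and assemble it as the rows of an $r\times m$ polynomial matrix $R(z)$, so that $R(z)\f(z)=0$ identically; any $\overline L$ of the desired form will be produced as $\boldsymbol{c}^{T}R(z)\X$ for a suitable $\boldsymbol{c}\in\Q^{r}$.

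The theorem is equivalent to the equality $\dim_{\Q}\langle f_1(\alpha),\ldots,f_m(\alpha)\rangle=d$, together with the statement that the rows of $R(\alpha)$ produce \emph{all} $\Q$-linear relations at $\alpha$. The inequality $\leq d$ is essentially automatic and follows from a short PID argument: if $\boldsymbol{c}\in\Q^{r}$ satisfies $\boldsymbol{c}^{T}R(\alpha)=0$, then $\boldsymbol{c}^{T}R(z)$ is divisible by $(z-\alpha)$ in $\Q[z]^{m}$; dividing out produces another element of $\R$, which, expressed in the basis $R$, forces $\boldsymbol{c}=0$. Thus $R(\alpha)$ has full row rank $r$ and its rows already supply $r$ independent $\Q$-linear relations at $\alpha$. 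The real content of the theorem is the reverse inequality $\geq d$: no ``extra'' relation may appear at $\alpha$.

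To produce this inequality, I would exploit a functional equation coming from the Mahler system. Applying $R(z)$ to $\f(z)=A(z)\f(z^{q})$ shows that the rows of $R(z)A(z)$ are $\Q(z)$-relations among the $f_i(z^{q})$; so are the rows of $R(z^{q})$. Both families span the full $r$-dimensional $\Q(z)$-space of such relations, hence
\[
R(z)A(z)=B(z)R(z^{q})\quad\text{for some}\quad B(z)\in\Gl_{r}(\Q(z)).
\]
Iterating transports relations along the orbit $\alpha,\alpha^{q},\alpha^{q^{2}},\ldots$ accumulating at $0$. Suppose for contradiction that some $\a\in\Q^{m}$ satisfies $\a^{T}\f(\alpha)=0$ while $\a$ is not a $\Q$-linear combination of the rows of $R(\alpha)$. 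Regularity of $A$ along the orbit then propagates this hypothetical extra relation to every iterate, producing in the end an $M_q$-function (a polynomial combination of the $f_i$) vanishing at each $\alpha^{q^{k}}$.

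The core difficulty, and the place where \emph{Hilbert's Nullstellensatz} enters, is that $B(z)$ may well have poles or become singular at some $\alpha^{q^{j}}$, obstructing a naive iteration. The remedy is to replace $R$ by a cleverly chosen polynomial combination of its rows, invoking the Nullstellensatz to express $1$ as a polynomial combination of quantities having no common zero on $\{\alpha^{q^{k}}\}_{k\geq 0}$, and thereby producing a new relation matrix whose associated $B$ behaves well along the orbit. Combining this clean-up with a vanishing theorem for $M_{q}$-functions --- which forbids a non-zero $M_{q}$-function from having infinitely many zeros accumulating at $0$ --- yields the desired contradiction. I expect the main technical obstacle to lie precisely in this Nullstellensatz-based cleaning and its interplay with the vanishing theorem.
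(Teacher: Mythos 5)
Your setup is sound and some of it is a genuinely nice reformulation: the free $\Q[z]$-module $\R$ of polynomial linear relations, the basis matrix $R(z)$, the divisibility argument showing $R(\alpha)$ has full row rank $r$, and the observation that the theorem is equivalent to ``every $\Q$-linear relation at $\alpha$ lies in the row space of $R(\alpha)$'' are all correct, as is the functional equation $R(z)A(z)=B(z)R(z^q)$ with $B(z)\in\Gl_r(\Q(z))$. This reformulation is not the one used in the paper (which works instead with the ideal $\I$ of polynomials in $\Q(z)[\Y]$ vanishing at $(A_k(\alpha),\alpha^{q^k})$ for all large $k$, and invokes the Nullstellensatz to produce a \emph{relation matrix} $\bphi(z)\in\Gl_m(\mathbb A)$ with $P(\bphi(z),z)=0$ for all $P\in\I$), but up to this point it is a legitimate alternative starting point.

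The genuine gap is the entire transcendence argument, which you compress into ``combining this clean-up with a vanishing theorem\ldots yields the desired contradiction.'' The ``vanishing theorem'' you invoke --- a non-zero $M_q$-function analytic near $0$ cannot vanish on a sequence accumulating at $0$ --- is just the identity theorem, and it is useless here unless you can first exhibit a \emph{single fixed} function vanishing at all $\alpha^{q^k}$. From $\a^T\f(\alpha)=0$ you only get the numerical relations $\a^TA_k(\alpha)\,\f(\alpha^{q^k})=0$, whose coefficient vectors $\a^TA_k(\alpha)$ are algebraic numbers depending on $k$; nothing in your sketch upgrades these to evaluations of a single coefficient vector of functions. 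Producing such a function is precisely what the paper's Lemma \ref{lem:P2-nulliteF} does, and its proof occupies the bulk of Section \ref{subsec:klemma}: one builds a Pad\'e-type auxiliary function out of elements of $\I^\perp(\delta_1,\delta_2)$ and the powers of $F(\Y,z)=\btau\Y\f(z)$, establishes an analytic upper bound for $|\mathfrak E(A_k(\alpha),\alpha^{q^k})|$ of size $e^{-c_2 q^k\delta_1\delta_2}$, a non-vanishing statement along a subsequence, and a Liouville lower bound $e^{-c_3 q^k\delta_2}$, and derives a contradiction by letting $\delta_1\to\infty$. None of this appears in your sketch, and the Nullstellensatz ``clean-up'' you propose for removing poles of $B(z)$ along the orbit (a partition-of-unity-type argument, quite different from the paper's use of the Nullstellensatz to produce $\bphi$) does not substitute for it. As written, the proposal stops exactly where the hard work begins.
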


The proof of this theorem is dividing in three subsections. We first establish the existence and properties of some special matrices 
which can be associated with a linear Mahler system. We call them the \textit{relation matrices}. Then, we construct an auxiliary function and use it to prove a \textit{key lemma} about the structure of the linear relations between $f_1(z),\ldots,f_m(z)$.  Finally, we show how this lemma allows us to lift any linear relation over $\Q$ between $f_{1}(\alpha),\ldots,f_{m}(\alpha)$ into a linear relation over $\Q(z)$ between $f_1(z),\ldots,f_m(z)$. 
Throughout  this section, we keep the notation of Theorem~\ref{thm:P2-liftinglinear}. 
\subsection{Notation}\label{sec: notation}

Let $d$ be a positive integer and $R$ be a commutative ring. 
Given an indeterminate $x$, we let $R[[x]]$ denote the ring of formal power series with coefficients in $R$. If $R\subset \mathbb C$, 
we let $R\{x\}$ denote the ring of convergent power series with coefficients in $R$, that is those elements of $R[[x]]$ that are analytic in some neighborhood of the origin. 
Given a $d$-tuple of non-negative integers 
$\k=(k_1,\ldots,k_d)$, we set $\vert \k\vert :=k_1+\cdots +k_d$. 
If $X_1,\ldots,X_d$ are indeterminates, we set $\X^\k := X_1^{k_1}\cdots X_d^{k_d}$. 
The total degree of a polynomial in $R[X_1,\ldots,X_d]$  is defined by 
$$\deg \left(\sum_{\k\in K} a_{\k}\X^{\k}\right):= \max \{\vert \k\vert : \k \in K\,, \, a_{\k}\not=0\}\,.$$

Given an $m\times n$ matrix $M:=(m_{i,j})$  with coefficients in $R$ and 
an $m\times n$ matrix $\bmu=(\mu_{i,j})$  with nonnegative integer coefficients, we set 
$$
M^\bmu := \prod_{\substack{1\leq i\leq m\\1\leq j\leq n }} m_{i,j}^{\mu_{i,j}}\, .
$$

We use the standard Landau notation $\mathcal O$. We also use the notation $\gg$ as follows.  
Writing that some property holds true for all integers $\lambda\gg1$ means that the corresponding property holds true for all $\lambda$ large enough; 
writing that some property holds true for all integers 
$\lambda_1\gg \lambda_2,\lambda_3$ means 
that the corresponding property holds true for all $\lambda_1$ that is sufficiently large w.r.t.\ 
$\lambda_2$ and $\lambda_3$; writing that some property holds true for all integers 
$\lambda_1\gg \lambda_2\gg \lambda_3$ means 
that the corresponding property holds true for all $\lambda_1$ that is sufficiently large w.r.t.\ $\lambda_2$, 
assuming that $\lambda_2$ is itself sufficiently large w.r.t.\ $\lambda_3$.

\subsection{Relation matrices}\label{sec:P2-matrix}
 
To shorten the notation, we set $$\f(z):=(f_1(z),\ldots,f_m(z))^\top\,.$$ 
For every integer $k\geq 0$, we set 
$$
A_k(z) :=A(z)A(z^q)\cdots A(z^{q^{k-1}})\,,
$$
 so that $A_0(z)={\rm I}_m$, the identity matrix of size $m$, $A_1(z)=A(z)$, and 
\begin{equation}
\label{eq:P2-Mahleritere}
\f(z)=A_k(z)\f(z^{q^k})\,,\quad\quad \forall k \geq 0\,.
\end{equation}
Let $\Y:=(y_{i,j})_{1\leq i,j\leq m}$ denote a matrix of indeterminates. Given a field $\mathbb K$ and a non-negative integer $\delta_1$, we let $\mathbb K[\Y]_{\delta_1}$ 
denote the set of polynomials of degree at most $\delta_1$ in each indeterminate  
$y_{i,j}$.  Given two non-negative integers $\delta_1$ and $\delta_2$, we let 
 $\mathbb K[\Y,z]_{\delta_1,\delta_2}$ denote the set of polynomials $P\in \mathbb K[\Y,z]$ of degree at most $\delta_1$ in every indeterminate $y_{i,j}$ and of degree at most $\delta_2$ in $z$.  
The identity theorem and the fact that $\alpha$ is a regular point with respect to \eqref{eq:P2-MahlerSystem} ensure that 
every polynomial $P\in \Q(z)[\Y]$ is well-defined at the point $(A_k(\alpha),\alpha^{q^k})$ for all $k\gg 1$.
Set  
\begin{equation*}
\I := \{P \in \Q(z)[\Y] \ : \ P(A_k(\alpha), \alpha^{q^k})= 0\, , 
\; \forall k\gg 1 \} \, .
\end{equation*}

\subsubsection{Estimates for the dimension of certain vector spaces}

Let $\delta_1$ and $\delta_2$ be two non-negative integers. Set 
$\mathcal I(\delta_1) := \mathcal I \cap  \Q(z)[\Y]_{\delta_1}$ and 
$\I(\delta_1,\delta_2):=\I \cap \Q[\Y,z]_{\delta_1,\delta_2}$. 
Note that  $\I(\delta_1,\delta_2)$ is a vector subspace of $ \Q[\Y,z]_{\delta_1,\delta_2}$, and   
let $\I^{\perp}(\delta_1,\delta_2)$ denote a complement to $\I(\delta_1,\delta_2)$ in 
$ \Q[\Y,z]_{\delta_1,\delta_2}$.

\begin{lem}\label{lem:P2-dimensionespace} Let $d(\delta_1,\delta_2)$ denote the dimension of  
$\I^\perp(\delta_1,\delta_2)$ over $\Q$. 
There exists a positive integer $c_1(\delta_1)$, that does not depend on $\delta_2$, such that 
$$
d(\delta_1,\delta_2) \sim c_1(\delta_1)\delta_2 \,, \mbox{ as $\delta_2$ tends to infinity. }
$$
\end{lem}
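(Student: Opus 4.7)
I would set $N := (\delta_1+1)^{m^2} = \dim_{\Q(z)} \Q(z)[\Y]_{\delta_1}$ and $r := \dim_{\Q(z)} \I(\delta_1)$, and show that the choice $c_1(\delta_1) := N - r$ works. Since $\dim_{\Q} \Q[\Y, z]_{\delta_1, \delta_2} = N(\delta_2 + 1)$, the claim is equivalent to $\dim_{\Q} \I(\delta_1, \delta_2) = r\,\delta_2 + O(1)$ as $\delta_2 \to \infty$. To prove this, I would filter $V_{\delta_2} := \Q[\Y, z]_{\delta_1, \delta_2}$ by the $z$-degree: with the convention $V_{-1} := 0$, the successive quotients $W_\nu := V_\nu / V_{\nu - 1}$ each have $\Q$-dimension $N$ (with basis $\{z^\nu \Y^\bmu\}_\bmu$), and the pieces $I_\nu := \I(\delta_1, \nu) / \I(\delta_1, \nu - 1)$ embed naturally into $W_\nu$. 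The key observation is that $\I$ is stable under multiplication by $z$: for $P \in \I$ and $k \gg 1$, $(zP)(A_k(\alpha), \alpha^{q^k}) = \alpha^{q^k} \cdot P(A_k(\alpha), \alpha^{q^k}) = 0$. The induced map $I_\nu \to I_{\nu+1}$ is compatible with the canonical isomorphism $W_\nu \xrightarrow{\sim} W_{\nu+1}$, $z^\nu \Y^\bmu \mapsto z^{\nu+1} \Y^\bmu$, and is therefore injective. Setting $a_\nu := \dim_{\Q} I_\nu$, this yields $a_\nu \le a_{\nu+1} \le N$, so the sequence $(a_\nu)$ stabilizes at some integer $r' \in \{0, \ldots, N\}$, and $\dim_{\Q} \I(\delta_1, \delta_2) = \sum_{\nu=0}^{\delta_2} a_\nu = r'\,\delta_2 + O(1)$.

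It remains to identify $r'$ with $r$ and to verify positivity of $c_1(\delta_1)$. For the identification, I would consider the $\Q[z]$-module $M := \I(\delta_1) \cap \Q[z][\Y]_{\delta_1}$; as a torsion-free submodule of the free rank-$N$ module $\Q[z][\Y]_{\delta_1}$ whose $\Q(z)$-span is $\I(\delta_1)$, it is free of rank $r$ over the PID $\Q[z]$. A standard Popov/Hermite-type reduction produces a $\Q[z]$-basis $Q_1, \ldots, Q_r$ of $M$ whose leading coefficients in $z$ are $\Q$-linearly independent in $\Q[\Y]_{\delta_1}$, so that $\deg_z\bigl(\sum_i f_i Q_i\bigr) = \max_i(\deg f_i + \deg_z Q_i)$ with no possible cancellation. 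A direct count then gives $\dim_{\Q}(M \cap V_{\delta_2}) = \sum_i(\delta_2 - \deg_z Q_i + 1) = r\,\delta_2 + O(1)$, forcing $r' = r$. Positivity $c_1(\delta_1) \ge 1$ follows because the constant polynomial $1$ lies in $\Q(z)[\Y]_{\delta_1} \setminus \I$, so $\I(\delta_1) \subsetneq \Q(z)[\Y]_{\delta_1}$ and $r < N$. The main bookkeeping step is the construction of the reduced $\Q[z]$-basis of $M$, carried out by iteratively replacing basis elements to lower the maximal $z$-degree whenever the top-degree leading coefficients are $\Q$-linearly dependent; the rest of the argument is formal once the filtration is in place.
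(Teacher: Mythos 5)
Your proof is correct, and it takes a genuinely different route from the paper's. The paper works dually: it characterizes $\I(\delta_1)$ by $d$ vectors of polynomials $(b_{i,1}(z),\ldots,b_{i,h}(z))$, translates membership in $\I(\delta_1,\delta_2)$ into explicit linear conditions on coefficients, and then proves two claims about the increment $c(\delta_1,\delta_2):=\dim V(\delta_1,\delta_2+1)-\dim V(\delta_1,\delta_2)$ (that it is $\geq 1$ and non-increasing, hence eventually constant); the second claim requires a somewhat fiddly manipulation of the linear forms $L_{\gamma,i}$. You instead work on the primal side with the $z$-degree filtration of $\I(\delta_1,\delta_2)$ itself. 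Your observation that multiplication by $z$ induces an injection $I_\nu\hookrightarrow I_{\nu+1}$ compatible with the natural isomorphism $W_\nu\cong W_{\nu+1}$ is exactly dual to the paper's second claim (indeed $c(\delta_1,\delta_2)=N-a_{\delta_2+1}$, so non-increasing increments on one side correspond to non-decreasing $a_\nu$ on the other), but it is considerably more transparent. Your identification of the limit via the $\Q[z]$-module $M=\I(\delta_1)\cap\Q[z][\Y]_{\delta_1}$ and a row-reduced (Popov/Forney) basis is not in the paper at all; it buys you the explicit formula $c_1(\delta_1)=N-\dim_{\Q(z)}\I(\delta_1)$, which the paper never states, and the positivity of $c_1(\delta_1)$ drops out immediately from $1\notin\I$. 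The trade-off is that you invoke the structure theorem for finitely generated modules over a PID and the existence of a reduced basis, whereas the paper's argument, though more bookkeeping-heavy, is entirely bare-hands linear algebra. Note also that once you have the Popov-basis count $\dim_\Q(M\cap V_{\delta_2})=r\delta_2+O(1)$, the filtration argument is not strictly needed; retaining both is harmless but slightly redundant.
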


\begin{proof}
Set $h:=(\delta_1+1)^{m^2}$ and let $\bnu_1,\ldots,\bnu_h$ denote an enumeration of the set of all  
matrices in ${\mathcal M}_m(\Z_{\geq 0})$ whose entries are at most $\delta_1$. Any polynomial $P \in \Q(z)[\Y]_{\delta_1}$ 
has a unique decomposition of the form    
$$
P(\Y,z):= \sum_{j=1}^h p_{j}(z) \Y^{\bnu_j}\, ,
$$
where $p_{j}(z) \in \Q(z)$, $1 \leq j \leq h$. 
Since, by definition, $\I(\delta_1)$ does not contain any non-zero elements of $\Q$, it is a strict $\Q(z)$-subspace of 
$\Q(z)[\Y]_{\delta_1}$. 
Thus, there exist an integer $d\geq 1$ and $d$ vectors of polynomials $(b_{i,1}(z),\ldots,b_{i,h}) \in \Q[z]^h$, $1\leq i \leq d$, 
which are linearly independent over $\Q(z)$ and such that  
for all $p_1(z),\ldots,p_h(z) \in \Q(z)$:
\begin{equation}\label{eq:P2-conditionI}
\sum_{j=1}^h p_j(z)\Y^{\bnu_j} \in \I(\delta_1) \Leftrightarrow \sum_{j=1}^h b_{i,j}(z)p_j(z) = 0 
\;\;\;\; \forall i,\; 1 \leq i \leq d\, .
\end{equation}
Since these polynomials only depend on $\delta_1$ (and $\mathcal I$), there exists $\delta'_1\geq 0$, which only depends on $\delta_1$ 
(and $\mathcal I$), such that 
$$
 b_{i,j}(z)=:\sum_{\kappa =0}^{\delta_1'} b_{i,j,\kappa} z^\kappa\, , \quad\quad b_{i,j,\kappa}\in\Q\,.
$$ 
Let us consider  
$P(\Y,z)=\sum_{j=1}^h p_j(z)\Y^{\bnu_j} \in \Q[\Y,z]_{\delta_1,\delta_2}$ and set  
$$
p_j(z) =: \sum_{\lambda \in \Z} p_{j,\lambda} z^\lambda \, ,
$$
where the numbers $p_{j,\lambda}$ belong to $\Q$ and $p_{j,\lambda}:=0$ if $\lambda >\delta_2$ or $\lambda < 0$.  
By  \eqref{eq:P2-conditionI},  $P$ belongs to $ \I(\delta_1,\delta_2)$ if and only if 
\begin{equation}\label{eq: gammarange}
\sum_{j=1}^h \sum_{ \kappa=0}^{\delta_1'} b_{i,j,\kappa}  p_{j,\gamma-\kappa} = 0 \, , \quad\quad \forall (\gamma,i)\,, \, 0\leq \gamma \leq  \delta_2+\delta'_1\,,\, 1\leq i\leq d\,.
\end{equation}
The number of linearly independent equations in \eqref{eq: gammarange} is  equal to the dimension of $\I^{\perp}(\delta_1,\delta_2)$. As $\delta_2$ tends to infinity, it is equivalent to the number of  linearly independent equations in \eqref{eq: gammarange} such that $\delta'_1 \leq \gamma \leq \delta_2$.
When $\gamma$, $\delta'_1 \leq \gamma \leq \delta_2$, is fixed, the number of linearly independent equations in \eqref{eq: gammarange} does not depend on $\gamma$.  Hence there exists a positive integer $c(\delta_1)$ which does not depend on $\delta_2$ such that
$$
\dim \I^{\perp}(\delta_1,\delta_2) \sim c(\delta_1)\delta_2  \,, \mbox{ as $\delta_2$ tends to infinity,}
$$
as wanted. A more detailed argument is provided in Section \ref{app:dimension}.
\end{proof}

\begin{lem}\label{lem:P2-majorationespacepolynome}
For every pair of non-negative integers $(\delta_1,\delta_2)$, one has 
$$
\dim \I^{\perp}(2\delta_1,\delta_2) \leq 2^{m^2}\dim \I^{\perp}(\delta_1,\delta_2) \,.
$$
\end{lem}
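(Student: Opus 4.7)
The strategy is to cover the larger space $\Q[\Y,z]_{2\delta_1,\delta_2}$ by $2^{m^2}$ monomial translates of the smaller space $\Q[\Y,z]_{\delta_1,\delta_2}$, so that each translate contributes at most $\dim \I^{\perp}(\delta_1,\delta_2)$ to the quotient dimension modulo $\I$. Concretely, for each matrix $\bmu=(\mu_{i,j}) \in \{0,1\}^{m\times m}$ I would define the $\Q$-linear map
$$\phi_{\bmu}\colon \Q[\Y,z]_{\delta_1,\delta_2} \longrightarrow \Q[\Y,z]_{2\delta_1,\delta_2}, \qquad P \longmapsto \Y^{\delta_1 \bmu}\, P.$$
Multiplication by $\Y^{\delta_1\bmu}$ raises the degree in $y_{i,j}$ by exactly $\delta_1\mu_{i,j}\in\{0,\delta_1\}$ and leaves the $z$-degree unchanged, so $\phi_\bmu$ is well defined. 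The key combinatorial observation is that every monomial $\Y^\bnu$ with $0\le\nu_{i,j}\le 2\delta_1$ lies in the image of some $\phi_\bmu$: take $\mu_{i,j}=1$ if $\nu_{i,j}>\delta_1$ and $\mu_{i,j}=0$ otherwise; then $\bnu-\delta_1\bmu\in\{0,\ldots,\delta_1\}^{m\times m}$, so $\Y^\bnu=\phi_\bmu(\Y^{\bnu-\delta_1\bmu})$. Summing over monomials gives
$$\Q[\Y,z]_{2\delta_1,\delta_2} \;=\; \sum_{\bmu\in\{0,1\}^{m\times m}} \phi_{\bmu}\!\left(\Q[\Y,z]_{\delta_1,\delta_2}\right).$$

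Next I would observe that $\I$ is an \emph{ideal} of $\Q(z)[\Y]$: if $P(A_k(\alpha),\alpha^{q^k})=0$ for all $k\gg 1$, then the same holds for $\Y^{\delta_1\bmu}\,P$. Since $\phi_\bmu$ preserves the $z$-degree, this yields $\phi_\bmu(\I(\delta_1,\delta_2))\subseteq \I(2\delta_1,\delta_2)$. Composing $\phi_\bmu$ with the projection onto $\Q[\Y,z]_{2\delta_1,\delta_2}/\I(2\delta_1,\delta_2)$ therefore factors through $\Q[\Y,z]_{\delta_1,\delta_2}/\I(\delta_1,\delta_2)$, whose $\Q$-dimension equals $\dim \I^\perp(\delta_1,\delta_2)$. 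Hence the image of each $\phi_\bmu$ in $\Q[\Y,z]_{2\delta_1,\delta_2}/\I(2\delta_1,\delta_2)$ has $\Q$-dimension at most $\dim \I^\perp(\delta_1,\delta_2)$.

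Combining the covering above with this factorisation yields
$$\dim \I^\perp(2\delta_1,\delta_2) \;\le\; \sum_{\bmu\in\{0,1\}^{m\times m}} \dim \I^\perp(\delta_1,\delta_2) \;=\; 2^{m^2}\dim \I^\perp(\delta_1,\delta_2),$$
which is the statement of the lemma. I do not anticipate any real obstacle here: the argument is a short dimension count once one has spotted the monomial decomposition. The only step requiring a brief check is that multiplication by $\Y^{\delta_1\bmu}$ is compatible with the bi-degree bounds defining $\Q[\Y,z]_{\delta_1,\delta_2}$, which is automatic because the factor is a monomial whose partial degrees are precisely $\delta_1\mu_{i,j}$ and whose $z$-degree is zero.
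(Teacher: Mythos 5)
Your argument is correct and is essentially the paper's own proof: the paper decomposes $P\in \Q[\Y,z]_{2\delta_1,\delta_2}$ as $\sum_\ell e_\ell(\Y)^{\delta_1}P_\ell(\Y,z)$ over the $2^{m^2}$ monomials $e_\ell$ of degree at most $1$ in each $y_{i,j}$ (these are exactly your $\Y^{\bmu}$), uses the ideal property of $\I$ in the same way, and deduces the bound from the resulting surjection $\left(\Q[\Y,z]_{\delta_1,\delta_2}/\I(\delta_1,\delta_2)\right)^{2^{m^2}}\to \Q[\Y,z]_{2\delta_1,\delta_2}/\I(2\delta_1,\delta_2)$. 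Your write-up is slightly more explicit in justifying the monomial decomposition, but the structure of the argument is identical.
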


\begin{proof}
Every $P\in \Q[\Y,z]_{2\delta_1,\delta_2}$ can be decomposed as 
\begin{equation}\label{eq:P2-pl}
P(\Y,z)=\sum_{\ell=1}^{2^{m^2}} e_\ell(\Y)^{\delta_1}P_\ell(\Y,z) \,,
\end{equation}
where we let $e_1(\Y),\ldots,e_{2^{m^2}}(\Y)$ denote the $2^{m^2}$ distinct monomials of degree at most $1$ in each $y_{i,j}$, 
and where the polynomials $P_\ell(\Y,z)$ all belong to $\Q[\Y,z]_{\delta_1,\delta_2}$. If each polynomial $P_\ell$ belongs to $\I(\delta_1,\delta_2)$ then $P \in \I(2\delta_1,\delta_2)$. Hence, the decomposition \eqref{eq:P2-pl} defines a linear map 
$$
\left(\Q[\Y,z]_{\delta_1,\delta_2}/ \I(\delta_1,\delta_2)\right)^{2^{m^2}} \mapsto \Q[\Y,z]_{2\delta_1,\delta_2}/ \I(2\delta_1,\delta_2)
$$ that is surjective. The result follows. 
\end{proof}

\subsubsection{Nullstellensatz and relation matrices} 

In this section, we show how Hilbert's Nullstellensatz allows us to ensure the existence of a matrix $\bphi$, 
whose coordinates are all algebraic over $\Q(z)$, and  
which we call a \emph{relation matrix}. Such a matrix encodes the linear relations over $\Q(z)$ between  the functions  
$f_1(z),\ldots,f_m(z)$ and is the cornerstone of the proof 
of Theorem~\ref{thm:P2-liftinglinear}. 

\medskip

We first prove the following lemma. 

\begin{lem}\label{lem:P2-idealI}
The set $\mathcal I$ is a radical ideal of $\Q(z)[\Y]$. 
\end{lem}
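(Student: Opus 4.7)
The plan is to verify the ideal axioms directly from the definition and then observe that the radical property follows for a trivial reason, namely that evaluation lands in a field.

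First, I would check that $\mathcal I$ is an ideal of $\overline{\mathbb Q}(z)[\Y]$. The zero polynomial is obviously in $\mathcal I$. If $P,Q \in \mathcal I$, pick $k_0$ large enough so that both vanish at $(A_k(\alpha),\alpha^{q^k})$ for all $k\geq k_0$, and then $P+Q$ vanishes at those points as well, so $P+Q\in\mathcal I$. For absorption, take $P\in\mathcal I$ and an arbitrary $R\in\overline{\mathbb Q}(z)[\Y]$. The regularity of $\alpha$ with respect to the system, combined with the identity theorem already invoked in the excerpt, ensures that $R$ (having only finitely many poles among its coefficients in $z$) is well-defined at $(A_k(\alpha),\alpha^{q^k})$ for all $k\gg 1$. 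Then
\[
(RP)(A_k(\alpha),\alpha^{q^k}) = R(A_k(\alpha),\alpha^{q^k})\cdot P(A_k(\alpha),\alpha^{q^k}) = 0
\]
for all $k\gg 1$, so $RP\in\mathcal I$.

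It remains to show that $\mathcal I$ is radical. Suppose $P\in \overline{\mathbb Q}(z)[\Y]$ and $P^n\in\mathcal I$ for some $n\geq 1$. By definition, $P^n(A_k(\alpha),\alpha^{q^k})=0$ for all $k\gg 1$. Since $P$ is itself well-defined at these points for $k$ large enough, the value $P(A_k(\alpha),\alpha^{q^k})$ is an element of $\overline{\mathbb Q}\subset\mathbb C$. As $\mathbb C$ is a field, hence has no nonzero nilpotents, we conclude that $P(A_k(\alpha),\alpha^{q^k})=0$ for all $k\gg 1$, i.e.\ $P\in\mathcal I$. This gives $\sqrt{\mathcal I}=\mathcal I$, as required.

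There is essentially no obstacle here; the only mildly delicate point is making sure that every polynomial one manipulates really is well-defined at the specialization points $(A_k(\alpha),\alpha^{q^k})$ for $k$ large, which is exactly what the regularity hypothesis is designed to guarantee. Once one keeps track of replacing ``for all $k$'' by ``for all $k\gg 1$'' throughout, both the ideal axioms and the radical property are immediate.
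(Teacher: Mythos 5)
Your proof is correct and follows essentially the same route as the paper: verify the ideal axioms by evaluating at the specialization points $(A_k(\alpha),\alpha^{q^k})$ for $k\gg 1$ (using regularity and the identity theorem to ensure well-definedness), then conclude radicality from the fact that these evaluations land in a field, which has no nonzero nilpotents.
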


\begin{proof} 
Checking that $\mathcal I$ is an ideal of $\Q(z)[\Y]$ is not difficult. If $P_1,P_2 \in \I$,
then $P_1+P_2$ vanishes at $(A_k(\alpha),\alpha^{q^k})$ for all $k\gg 1$ and hence $P_1+P_2 \in \I$. 
Now let $P_1 \in \I$ and $P_2\in \Q(z)[\Y]$. On the one hand, $P_1(A_k(\alpha),\alpha^{q^k})=0$ 
 for all $k \gg 1$ and 
$P_2(\Y,z)$ is well-defined at $(A_k(\alpha),\alpha^{q^k})$ for $k\gg 1$. We deduce that 
\[
P_1(A_k(\alpha),\alpha^{q^k})P_2(A_k(\alpha),\alpha^{q^k})=0\quad\quad  \forall k\gg 1\,.
\]
Hence $P_1P_2 \in \I$. Let $P\in \Q(z)[\Y]$ be such that $P^r \in \I$ for some $r$. If $k$ is a non-negative integer such that  $P(A_k(\alpha),\alpha^{q^k})^r =0$, then $P(A_k(\alpha),\alpha^{q^k})=0$.  
Hence $P \in \I$ and $\I$ is a radical ideal. 
\end{proof}

Throughout this article, we let  $\mathbb A\subset \bigcup_{d \geq 1}\Q((z^{1/d}))$ denote the algebraic closure of $\Q(z)$ in the field of Puiseux series. By the Newton-Puiseux Theorem, $\mathbb A$ is algebraically closed. 

\begin{lem}\label{lem:P2-phi1}
There exists a matrix $\bphi(z) \in  {\rm GL}_m(\mathbb A)$ such that 
$$
P(\bphi(z),z) = 0\, ,
$$ 
for all polynomials $P \in \I$.
\end{lem}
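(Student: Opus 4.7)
The strategy is a direct application of Hilbert's Nullstellensatz over the algebraically closed field $\mathbb A$, combined with the radicality of $\I$ established in Lemma~\ref{lem:P2-idealI}. Viewing $\I$ as an ideal of $\Q(z)[\Y]$ and working in the algebraic closure $\mathbb A$ of $\Q(z)$, the problem reduces to finding a point in the zero locus $V_{\mathbb A}(\I) \subset \mathbb A^{m^2}$ at which $\det(\Y)$ does not vanish.

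First, I would observe that $\I$ is a proper ideal, so that $V_{\mathbb A}(\I)$ is non-empty. This is immediate: the constant polynomial $1$ is not in $\I$, since it does not vanish at $(A_k(\alpha),\alpha^{q^k})$ for any $k$. The weak Nullstellensatz then yields a point $\bphi \in \mathbb A^{m^2}$ at which every element of $\I$ vanishes.

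Second, I would argue that $\det(\Y)$ cannot vanish on all of $V_{\mathbb A}(\I)$. By the strong Nullstellensatz and the fact that $\I$ is radical (Lemma~\ref{lem:P2-idealI}), this will follow from the non-membership $\det(\Y)\notin\I$. But this is a direct consequence of the regularity of $\alpha$ with respect to \eqref{eq:P2-MahlerSystem}: each matrix $A_k(\alpha) = A(\alpha)A(\alpha^q)\cdots A(\alpha^{q^{k-1}})$ is invertible by definition of regularity, so $\det(A_k(\alpha)) \neq 0$ for every $k \geq 0$, which prevents $\det(\Y)$ from belonging to $\I$. Consequently, there exists $\bphi(z) \in V_{\mathbb A}(\I)$ with $\det(\bphi(z)) \neq 0$, i.e.\ $\bphi(z) \in {\rm GL}_m(\mathbb A)$, as required.

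The main conceptual ingredient is thus the combination of the radicality of $\I$ with the strong form of the Nullstellensatz; once that framework is in place, the regularity hypothesis on $\alpha$ is precisely what guarantees the invertibility of the relation matrix. I do not anticipate a serious obstacle: since $\mathbb A$ is algebraically closed by the Newton--Puiseux theorem and contains $\Q(z)$, the Nullstellensatz applies directly to the ideal $\I \subset \Q(z)[\Y]$ via base change.
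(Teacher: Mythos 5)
Your proposal is correct and follows essentially the same approach as the paper: weak Nullstellensatz for non-emptiness of the zero locus, then strong Nullstellensatz together with radicality of $\I$ to show $\det(\Y)\notin\I$, which is ruled out by the regularity of $\alpha$. The only cosmetic difference is that you argue directly while the paper phrases the second step as a contradiction.
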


\begin{proof}
Let us consider the affine algebraic set $\mathcal V$ associated with the radical ideal $\I$. That is,   
$$
{\mathcal V}:=\{\bphi(z) \in \mathcal M_m(\mathbb A)\ : \ P(\bphi(z),z) = 0 \,, \; \forall P \in \I \}\, .
$$
According to the weak form of Hilbert's Nullstellensatz (see, for instance, \cite[Theorem 1.4, p.\ 379]{Lang}),  
$\mathcal V$ is non-empty as soon as  
$\I$ is a proper ideal of $\Q(z)[\Y]$. But the definition of $\mathcal I$ implies that non-zero constant polynomials do not belong to 
$\mathcal I$. Hence $\mathcal V$ is non-empty.  

Now, let us assume by contradiction that $\det \bphi(z)=0$ for all $\bphi(z)$ in $\mathcal V$. 
By Hilbert's Nullstellensatz (see, for instance, \cite[Theorem 1.5, p.\ 380]{Lang}), 
the polynomial $\det \Y$  belongs to the radical of the ideal  
$\I$. Hence  $\det \Y\in \I$  for $\I$ is radical. Thus,  $\det A_k(\alpha)=0$ 
for $k\gg 1$. This provides a contradiction since $A_k(\alpha)$ is invertible 
for all $k\geq 0$. We thus deduce that there exists an invertible matrix $\bphi(z)$ 
in $\mathcal V$, as wanted. 
\end{proof}

\begin{defi} A matrix $\bphi(z) \in  {\rm GL}_m(\mathbb A)$ satisfying the property of Lemma 
 \ref{lem:P2-phi1} is called a \emph{relation matrix}. 
\end{defi}

The next lemma plays a central role in the proof of Theorem  \ref{thm:P2-liftinglinear}.

\begin{lem}\label{lem:P2-phi2}
Let $\bphi(z) \in  {\rm GL}_m(\mathbb A)$ 
be a relation matrix.  
Then  
$$
P\left(\bphi(z) A_k(z),z^{q^k}\right) = 0\, ,
$$ 
for all $P \in \I$ and  all $k \geq 0$. 
\end{lem}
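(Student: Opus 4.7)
The plan is to prove Lemma \ref{lem:P2-phi2} by a direct substitution argument: associate to each $P \in \I$ and each $k \geq 0$ an auxiliary polynomial $Q \in \Q(z)[\Y]$, show that $Q$ still lies in $\I$, and then apply the defining property of the relation matrix $\bphi(z)$ to $Q$.

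More precisely, fix $P \in \I$ and $k \geq 0$, and consider the polynomial
\[
Q(\Y,z) \;:=\; P\bigl(\Y \cdot A_k(z),\, z^{q^k}\bigr).
\]
First I would check that $Q$ genuinely belongs to $\Q(z)[\Y]$: the map $\Y \mapsto \Y A_k(z)$ is $\Q(z)$-linear in the entries of $\Y$, so substituting it into $P$ produces a polynomial in $\Y$ whose coefficients lie in $\Q(z)$ (we also replace $z$ by $z^{q^k}$, which is harmless). Thus $Q\in\Q(z)[\Y]$, and $Q$ is well-defined at $(A_j(\alpha),\alpha^{q^j})$ for every $j \gg 1$ by regularity of $\alpha$.

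Next I would verify that $Q \in \I$. The cocycle-type identity
\[
A_j(z)\,A_k(z^{q^j}) \;=\; A(z)A(z^{q})\cdots A(z^{q^{j-1}})\cdot A(z^{q^{j}})\cdots A(z^{q^{j+k-1}}) \;=\; A_{j+k}(z)
\]
follows immediately from the definition of the iterated matrices. Specialising at $z = \alpha$ and plugging into $Q$ gives
\[
Q\bigl(A_j(\alpha),\alpha^{q^j}\bigr) \;=\; P\bigl(A_j(\alpha)A_k(\alpha^{q^j}),\,\alpha^{q^{j+k}}\bigr) \;=\; P\bigl(A_{j+k}(\alpha),\alpha^{q^{j+k}}\bigr),
\]
which vanishes for all $j \gg 1$ since $P \in \I$. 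Hence $Q \in \I$.

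To finish, I would apply the defining property of the relation matrix $\bphi(z)$ given by Lemma~\ref{lem:P2-phi1}: since $Q \in \I$, we have $Q(\bphi(z),z) = 0$, and unwinding the definition of $Q$ yields exactly
\[
P\bigl(\bphi(z)\,A_k(z),\,z^{q^k}\bigr) \;=\; 0,
\]
as required. There is essentially no obstacle here; the only point deserving care is keeping the indeterminate matrix $\Y$ on the left of $A_k(z)$ when defining $Q$, so that the substitution commutes with the product $A_j(\alpha)A_k(\alpha^{q^j})$ and the cocycle relation applies in the right order.
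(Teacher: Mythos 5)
Your proposal is correct and matches the paper's proof essentially verbatim: both define $Q(\Y,z):=P(\Y A_k(z),z^{q^k})$, use the cocycle identity $A_\ell(\alpha)A_k(\alpha^{q^\ell})=A_{k+\ell}(\alpha)$ to show $Q\in\I$, and then apply the defining property of the relation matrix to conclude.
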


\begin{proof} 
 
Let $P \in \I$, $\bphi(z) \in  {\rm GL}_m(\mathbb A)$ be a relation 
matrix, and $k$ be a non-negative integer. Set $Q(\Y,z):=P(\Y A_k(z),z^{q^k}) \in \Q(z)[\Y]$. 
For every $\ell \gg 1$, the polynomial $Q(\Y,\alpha^{q^\ell})$ is well-defined and we have
$$
Q(A_\ell(\alpha),\alpha^{q^\ell}) 
= P(A_\ell(\alpha) A_k(\alpha^{q^\ell}),(\alpha^{q^\ell})^{q^k})
=P(A_{k+\ell}(\alpha),\alpha^{q^{k+\ell}}) = 0\, ,
$$
since $A_\ell(\alpha) A_k(\alpha^{q^\ell})=A_{k+\ell}(\alpha)$. Hence $Q \in \I$ and 
$$
P(\bphi(z) A_k(z),z^{q^k})=Q(\bphi(z),z)=0\,,
$$
as wanted. 
\end{proof}

\subsubsection{Analyticity and relation matrices}
We address now the question of the analyticity of relation matrices.

\begin{lem}
\label{lem:P2-zeroMasserextended} 
 Let $\bphi(z) \in  {\rm GL}_m(\mathbb A)$ be a relation matrix. Then the three  following properties holds for $k \gg 1$.  

\begin{itemize}

\item[{\rm (a)}] The point $\alpha^{q^{k}}$ belongs to the disc of convergence of each of the functions 
$f_{1}(z),\ldots,f_{m}(z)$. 

\item[{\rm (b)}] Each coordinate of $\bphi(z)$ defines an analytic function on some neighborhood of $\alpha^{q^{k}}$.

\item[{\rm (c)}] The matrix $\bphi(\alpha^{q^{k}})$ is invertible. 
\end{itemize}
\end{lem}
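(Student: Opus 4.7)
The plan is to treat the three statements essentially in parallel, after observing that since $0<|\alpha|<1$ the iterates $\alpha^{q^k}$ tend to $0$. So what really has to be shown is that each of the relevant objects — the analytic functions $f_i(z)$, the Puiseux-series entries of $\bphi(z)$, and the Puiseux series $\det\bphi(z)$ — behaves well in a sufficiently small punctured neighborhood of the origin. Once this is established, Property (a)--(c) follow from the fact that $\alpha^{q^k}$ lies in every such neighborhood as soon as $k$ is large enough.

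For (a), I would just invoke the result quoted in the introduction: every $M_q$-function is meromorphic in the open unit disc, so in particular each $f_i(z)\in\Q[[z]]$ has positive radius of convergence, and $\alpha^{q^k}$ enters this disc of convergence for all $k\gg 1$. For (b), each coordinate $\phi_{i,j}(z)$ of $\bphi(z)$ lies in $\mathbb A\subset\bigcup_{d\geq 1}\Q((z^{1/d}))$, so there is a common integer $d\geq 1$ such that every $\phi_{i,j}(z)$ is of the form $\sum_{n\geq n_0}a_n z^{n/d}$. Because the $\phi_{i,j}$ are algebraic over $\Q(z)$, the Newton--Puiseux theorem (or the classical fact that algebraic formal series are convergent) guarantees that all these Puiseux series converge on some common punctured disc $0<|z|<r$, once one fixes a branch of $z^{1/d}$. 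For $k\gg 1$ the point $\alpha^{q^k}$ lies in this disc, and on any simply connected neighborhood of $\alpha^{q^k}$ avoiding $0$ one can choose a branch of $z^{1/d}$, so each $\phi_{i,j}(z)$ becomes a genuine analytic function there.

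For (c), the key observation is that $\bphi(z)\in\mathrm{GL}_m(\mathbb A)$ means $\det\bphi(z)$ is a nonzero element of $\mathbb A$. Write this Puiseux series as $z^{\nu}g(z)$ with $\nu\in\mathbb Q$ and $g$ a Puiseux series of valuation $0$ (so in particular $g$ does not vanish identically). In the punctured disc where the analytic interpretation of (b) is valid, $g$ is an analytic function with nonzero "constant term", hence is nonzero in some possibly smaller punctured disc around $0$. Consequently $\det\bphi(z)\neq 0$ throughout a punctured neighborhood of the origin, and the accumulation $\alpha^{q^k}\to 0$ forces $\det\bphi(\alpha^{q^k})\neq 0$ for $k\gg 1$, giving invertibility of $\bphi(\alpha^{q^k})$.

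The only mildly subtle point — and the one I would flag as the main obstacle — is justifying that the formal Puiseux series appearing in $\bphi(z)$ are in fact convergent in a punctured disc around the origin and define analytic functions after a choice of branch. This is a classical consequence of algebraicity, but it deserves an explicit mention (or a reference to the appendix) so that the argument remains self-contained; once this is granted, the rest of the proof reduces to the isolated-zero principle applied to $\det\bphi(z)$ and to convergence bookkeeping.
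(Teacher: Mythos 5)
Your proposal is correct and follows essentially the same line as the paper: (a) is immediate from $\alpha^{q^k}\to 0$ and analyticity of the $f_i$ near $0$, while (b) and (c) both reduce to the good behaviour of algebraic functions near the origin. The only cosmetic difference is that you argue locally via convergent Puiseux expansions and the valuation of $\det\bphi(z)$ at $0$, whereas the paper invokes the global fact that an algebraic function has only finitely many singularities and finitely many zeros; these are equivalent standard facts, and either phrasing closes the argument.
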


\begin{proof} 
Since 
$\lim_{k\to\infty} \alpha^{q^{k}} =0$ and  $f_1(z),\ldots,f_m(z)$ are analytic on some neighborhood of $0$, Property (a) holds for $k\gg 1$. Recall that an algebraic function has only finitely many singularities and finitely many zeros. Hence, for $k\gg 1$, $\alpha^{q^{k}}$ is neither a singularity of one of the coordinates of $\bphi(z)$ nor a zero of $\det \bphi(z)$. We deduce that Properties (b) and (c) hold for  $k\gg 1$. 
\end{proof}

\subsection{The key Lemma}\label{subsec:klemma}

Let 
$$
L(X_1,\ldots,X_m)=: \sum_{j=1}^m \tau_j X_i  \in \Q[X_1,\ldots,X_r]\,  
$$
be defined as in Theorem  \ref{thm:P2-liftinglinear}. 
Set $\btau:=(\tau_1,\ldots,\tau_m) \in \Q^m$ and $\X:=(X_1,\ldots,X_m)^\top$, so that $L(\X)=\btau\X$. 
Given a matrix of indeterminates 
$\Y:=(y_{i,j})_{1\leq i,j\leq m}$, we set 
\begin{equation*}
\label{eq:P2-defF}
F(\Y,z):=\sum_{i,j} \tau_i y_{i,j} f_j(z) = \btau \Y \f(z) \in \Q\{z\}[\Y]\, ,
\end{equation*}
where we recall that $\f(z) := (f_1(z),\ldots,f_m(z))^\top$. Note that $F$ is a linear form in $\Y$. 
Evaluating at $({\rm I}_m,\alpha)$, where ${\rm I}_m$ is the identity matrix of size $m$, we obtain that
\begin{equation}\label{eq:P2-annulationF}
F({\rm I}_m,\alpha)=\sum_{i=1}^m  \tau_if_i(\alpha) =L(\f(\alpha)) = 0 \,. 
\end{equation}

\begin{rem}\label{rem:P2-F} 
We have  $F(\Y,z)  \in \Q[\Y,\f(z)]\subset\Q\{z\}[\Y]$. Also, $F(\Y,z)$ can be seen as an element of $\Q[\Y][[z]]$, 
as we will sometimes do in what follows. 
\end{rem}

\subsubsection{Iterated relations}

For every $k \geq 0$, Equality \eqref{eq:P2-Mahleritere} implies the following equality in $\mathbb A[\Y]$:   
\begin{eqnarray}\label{eq:P2-iterationF}
\nonumber F(\Y,z) &=& \btau \Y  \f(z) \\
&=& \btau \Y A_k(z) \f(z^{q^k}) \\
\nonumber &=& F(\Y A_k(z),z^{q^k})\, .
\end{eqnarray}
The point $\alpha$ being regular with respect to \eqref{eq:P2-MahlerSystem},  
we deduce from \eqref{eq:P2-annulationF} 
 that  
\begin{equation}
\label{eq:P2-annulationFitere}
F(A_k(\alpha),\alpha^{q^k})=0 \,, \quad\quad \forall k \geq 0\,.
\end{equation}

\subsubsection{The matrices $\bTheta_k(z)$}\label{subsubsec:P2 theta_k} 

From now on, we fix a relation matrix ${\bphi}(z)$ and a non-negative integer $k_0$  
satisfying the properties of  Lemma 
 \ref{lem:P2-zeroMasserextended}. Set  
\begin{equation}\label{eq: k0}
\xi := \alpha^{q^{k_0}} \, .
\end{equation}
Item (a) in Lemma  \ref{lem:P2-zeroMasserextended} ensures the existence of a positive
real number $r_1<1$ such that $0<\vert \xi \vert<r_1$ and such that all the power series 
$f_{1}(z),\ldots,f_{m}(z)$ have a radius of convergence larger than $r_1$.  
Then, by  Item (b) in the same lemma, we can choose 
$r_2>0$ satisfying  $0< \vert \xi \vert +r_2< r_1$ and such that the 
coefficients of the matrix $\bphi(z)$ are analytic on the disc $\mathcal D(\xi,r_2)$. 
For every $k\geq k_0$, we set   
\begin{equation}\label{eq:P2-theta}
\bTheta_k(z):= A_{k_0}(\alpha)\bphi(\alpha^{q^{k_0}})^{-1}\bphi(z)A_{k-k_0}(z)\, 
\end{equation}
so that we have  $\bTheta_k(\xi) = A_k(\alpha)$, for every $k\geq k_0$.

\begin{rem}\label{rem:P2-thetak}
By Lemma  \ref{lem:P2-zeroMasserextended}, the coefficients of 
$\bTheta_{k_0}(z)$ are analytic on the disc   
$\mathcal D(\xi,r_2)$.  On the other hand, one has 
$$
\bTheta_k(z)=\bTheta_{k-1}(z) A(z^{q^{k-1-k_0}})\,, \quad\quad \forall k>k_0\,.
$$ 
This  implies that, for every $k\geq k_0$,  the coefficients of 
$\bTheta_{k}(z)$ are analytic on some neighborhood of $\xi$, that  is on some disc 
$\mathcal D(\xi,r_k)\subset \mathcal D(\xi,r_2)$.  
In what follows, we will consider the expression  
$F(\bTheta_k(z),z^{q^{k-k_0}})$. Formally, it is a polynomial in 
$f_1(z^{q^{k-k_0}}),\ldots,f_m(z^{q^{k-k_0}})$ and the coordinates of 
$\bTheta_k(z)$. 
Note that it also defines an analytic function on  
$\mathcal D(\xi,r_k)\subset \mathcal D(\xi,r_2)$. 
In addition, $F(\bTheta_{k_0}(z),z)$ is analytic on 
 $\mathcal D(\xi,r_2)$. 
Indeed, the functions $f_{1}(z),\ldots,f_{m}(z)$ are analytic on 
$\mathcal D(0,r_1)\supset\mathcal D(\xi,r_2)$, while our choice of  
$k_0$ ensures that the coordinates of   
$\bTheta_{k_0}(z)$ are analytic on $\mathcal D(\xi,r_2)$. 
\end{rem}

\subsubsection{The key lemma}\label{sec:P2-cle}
 
The end of the section is devoted to proof of the following result.

\begin{lem}
\label{lem:P2-nulliteF}
One has  
$F(\bTheta_{k_0}(z),z)=0$.
\end{lem}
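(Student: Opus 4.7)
The plan is to show that the analytic function $G(z):=F(\bTheta_{k_0}(z),z)$, which is analytic on $\mathcal D(\xi,r_2)$ by Remark \ref{rem:P2-thetak}, vanishes identically on this disc.

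The first step is to unwind the definitions into a useful identity. Starting from the functional equation \eqref{eq:P2-iterationF} applied with the matrix $A_{k-k_0}(z)$ for $k\geq k_0$, and using the factorization $\bTheta_k(z)=\bTheta_{k_0}(z)A_{k-k_0}(z)$ that falls out of \eqref{eq:P2-theta}, I would derive
\[
G(z)=\btau\,\bTheta_{k_0}(z)\f(z)=\btau\,\bTheta_{k_0}(z)A_{k-k_0}(z)\f(z^{q^{k-k_0}})=F(\bTheta_k(z),z^{q^{k-k_0}})
\]
for every $k\geq k_0$. Evaluating at $z=\xi$ and using both $\bTheta_k(\xi)=A_k(\alpha)$ and $\xi^{q^{k-k_0}}=\alpha^{q^k}$ converts this into $G(\xi)=F(A_k(\alpha),\alpha^{q^k})$, which is $0$ by \eqref{eq:P2-annulationFitere}. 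So $G$ has at least a simple zero at $\xi$.

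The heart of the proof is then to upgrade this single-point vanishing to identical vanishing. I would argue by contradiction: assume $G\not\equiv 0$, so that $T:=\operatorname{ord}_{z=\xi}G$ is a finite positive integer. The aim is to derive a contradiction between the analytic estimates on $G$ coming from the iterated identity above and the algebraic structure captured by the relation matrix $\bphi$. The strategy is to feed this information into an auxiliary polynomial construction: using Lemma \ref{lem:P2-dimensionespace} (the dimension of $\I^\perp(\delta_1,\delta_2)$ grows only linearly in $\delta_2$) and Lemma \ref{lem:P2-majorationespacepolynome}, a pigeonhole argument for suitable $\delta_1,\delta_2$ should produce a polynomial $P\in\Q[\Y,z]_{\delta_1,\delta_2}$, not lying in $\I$, whose vanishing behaviour at the iterates $(A_k(\alpha),\alpha^{q^k})$ (lifted via $\bTheta_k$) mimics that of $G^{T+1}$ at $\xi$. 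Since $P\notin\I$ the polynomial $P(\bphi(z),z)$ is not identically zero; but Lemma \ref{lem:P2-phi2} propagates the vanishing through iterates by giving $P(\bphi(z)A_k(z),z^{q^k})=0$ for any $P\in\I$, and the Mahler substitution $z\mapsto z^{q^{k-k_0}}$ produces a super-exponentially strong contraction near $\xi$. Comparing the resulting upper bound (from analytic iteration) with a Liouville-type lower bound on $P(\bphi(z_0),z_0)$ at a suitable $z_0$ near $\xi$ forces a contradiction once $k$ is taken large enough compared to $\delta_1,\delta_2,T$.

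The main obstacle is precisely this last step: calibrating the parameters $\delta_1,\delta_2,T$ and the iteration depth $k$ so that the analytic upper bound coming from the rapid decay of $z^{q^{k-k_0}}-\alpha^{q^k}$ genuinely beats the algebraic lower bound. The delicate point is that, while the Mahler substitution provides a super-exponentially small factor $\lvert\xi\rvert^{q^{k-k_0}-1}$, the matrix $\bTheta_k(z)=\bTheta_{k_0}(z)A_{k-k_0}(z)$ may grow with $k$ through the product $A_{k-k_0}(z)$, and this growth must be kept in check by exploiting the polynomial identities $P(\bphi(z)A_{k-k_0}(z),z^{q^{k-k_0}})=0$ (Lemma \ref{lem:P2-phi2}) to absorb it. Once this balancing act goes through, every Taylor coefficient of $G$ at $\xi$ must vanish, so $G\equiv 0$ on $\mathcal D(\xi,r_2)$ by the identity principle.
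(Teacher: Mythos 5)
Your high-level strategy matches the paper's: argue by contradiction, build an auxiliary polynomial via a pigeonhole based on Lemmas~\ref{lem:P2-dimensionespace} and \ref{lem:P2-majorationespacepolynome}, derive an analytic upper bound from the contraction in the Mahler iterates, and play it against a Liouville lower bound. But there are concrete gaps that leave the proof incomplete, and one of your intermediate assertions is false.

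First, the claim ``Since $P\notin\I$ the polynomial $P(\bphi(z),z)$ is not identically zero'' does not follow. The ideal of polynomials vanishing at the single point $\bphi(z)\in\mathcal V$ is a maximal ideal of $\Q(z)[\Y]$ that contains $\I$ and will in general contain it strictly; $P\notin\I$ says nothing about whether $P$ vanishes at $\bphi(z)$. What the paper actually uses from $P_{v_0}\notin\I$ is the \emph{definition} of $\I$: it ensures that $P_{v_0}(A_k(\alpha),\alpha^{q^k})\neq 0$ for infinitely many $k$, which is precisely Step~(NV).

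Second, you propose a Liouville-type lower bound ``on $P(\bphi(z_0),z_0)$ at a suitable $z_0$ near $\xi$.'' Liouville's inequality applies to nonzero \emph{algebraic numbers}, and for a generic $z_0$ near $\xi$ the quantity $P(\bphi(z_0),z_0)$ is neither obviously algebraic nor of controllable height. The paper evaluates exactly at $z=\xi$, i.e.\ at the algebraic points $(A_k(\alpha),\alpha^{q^k})$: these lie in a fixed number field, their height grows like $\mathcal O(q^k\delta_2)$ (Lemma~\ref{lem: hauteurAk} and \eqref{eq: height3}), and Liouville gives the lower bound $e^{-c_3q^k\delta_2}$.

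Third, and most fundamentally, your sketch never arrives at the crucial identity that makes the whole scheme close. After producing the auxiliary power series $E(\Y,z)=\sum_j P_j(\Y,z)F(\Y,z)^j$ and setting $\mathfrak E:=E/F^{v_0}$ with $v_0$ the smallest index with $P_{v_0}\neq 0$, the fact that $F(A_k(\alpha),\alpha^{q^k})=0$ (equation \eqref{eq:P2-annulationFitere}) gives
\[
\mathfrak E(A_k(\alpha),\alpha^{q^k}) \;=\; P_{v_0}(A_k(\alpha),\alpha^{q^k}).
\]
This single identity is what connects the analytic upper bound (Step~(UB), applied to the left side via the Taylor coefficient comparison at order $\lambda_0:=\operatorname{ord}_{z=\xi}F(\bTheta_{k_0}(z),z)^{v_0}$) to the algebraic lower bound (Step~(LB), applied to the right side). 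Your idea of comparing vanishing orders via $G^{T+1}$ is a substitute that does not land on a well-defined algebraic number, so there is nothing to feed to Liouville. Without the division by $F^{v_0}$ and the evaluation at the algebraic points $(A_k(\alpha),\alpha^{q^k})$, the balancing act you describe cannot be completed.

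Finally, the mismatch in powers of $\delta_1$ is the source of the contradiction and is worth flagging: the upper bound scales like $e^{-c_2 q^k\delta_1\delta_2}$ while the lower bound scales like $e^{-c_3 q^k\delta_2}$, so one deduces $c_3\geq c_2\delta_1$ and then chooses $\delta_1$ large. Your sketch gestures at a comparison but never isolates this decisive gain of a factor $\delta_1$, which comes from letting the auxiliary function involve the first $\delta_1$ powers of $F$.
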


 In what follows, we argue by contradiction, assuming that 
\begin{equation}\label{eq:P2-notzero}
F(\bTheta_{k_0}(z),z)\not=0\,.
\end{equation}
We divide the proof of Lemma  \ref{lem:P2-nulliteF} into the four steps (AF), (UB), (NV), and (LB), 
following the classical proof scheme in transcendental number theory. In Step (AF) we build an \textit{auxiliary function} by considering some sort of Padé approximant of type I for the first powers of $F(\Y,z)$. In Step (UB) we compute some \textit{upper bound} for the absolute value of the evaluation of our auxiliary function at $(A_k(\alpha),\alpha^{q^k})$, for large $k$, by means of analytic estimates. In Step (NV) we prove that our auxiliary function is \textit{non-vanishing} at $(A_k(\alpha),\alpha^{q^k})$ for infinitely many $k$. In Step (LB), 
we provide a \textit{lower bound} for the absolute value of the evaluation of our auxiliary function at $(A_k(\alpha),\alpha^{q^k})$, for infinitely many $k$, by  using Liouville's inequality. Finally, we show that the steps (UB) and (LB) lead to a contradiction.

\medskip
\noindent \textbf{Step (AF).}
Given a formal power series $E:=\sum_{\lambda\geq 0} e_\lambda(\Y)z^\lambda \in \Q[\Y][[z]]$ 
and an integer $p>0$, we let  
$$
E_p:=\sum_{\lambda=0}^{p-1} e_\lambda(\Y)z^\lambda \in \Q[\Y,z]
$$
denote the truncation of  $E$ at order $p$ with respect to $z$. 
We recall that  
$\I^\perp(\delta_1,\delta_2)$ is a complement to  
$\I(\delta_1,\delta_2)$ in $\Q[\Y,z]_{\delta_1,\delta_2}$. 

\begin{lem}
\label{lem:P2-fonctionauxiliaire}
Let $\delta_1\geq 0$ and $\delta_2\gg \delta_1$ be two integers. 
Let  $p:=\left\lfloor \frac{\delta_1\delta_2}{2^{m^2+2}}\right\rfloor$. 
Then there exist polynomials $P_i\in \I^{\perp}(\delta_1,\delta_2)$, 
$0 \leq i \leq \delta_1$, not all zero,  
 such that the formal power series  
$$
E(\Y,z) := \displaystyle\sum_{j=0}^{\delta_1} P_j(\Y,z)F(\Y,z)^j \in \Q[\Y][[z]]
$$ 
satisfies 
$E_p(\bTheta_k(z),z^{q^{k-k_0}})=0$ for all $k \geq  k_0$.
\end{lem}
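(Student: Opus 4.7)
My plan is a classical Thue--Siegel pigeonhole construction. I would treat the $\Q$-coefficients of $(P_0,\ldots,P_{\delta_1})\in \I^\perp(\delta_1,\delta_2)^{\delta_1+1}$ as unknowns, and impose the single linear condition that the $z$-truncation $E_p$ of $E=\sum_j P_jF^j$ lies in $\I(2\delta_1,p-1)$. This is meaningful because $F(\Y,z)=\btau\Y\f(z)$ is linear in $\Y$, so each product $P_j(\Y,z)F(\Y,z)^j$ has degree at most $\delta_1+j\leq 2\delta_1$ in every variable $y_{i,j}$; consequently $E_p\in\Q[\Y,z]_{2\delta_1,p-1}$ and it makes sense to project it onto $\I^\perp(2\delta_1,p-1)$.

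The next step is a dimension count. By Lemma~\ref{lem:P2-dimensionespace}, the parameter space has $\Q$-dimension $(\delta_1+1)\,d(\delta_1,\delta_2)\sim(\delta_1+1)c_1(\delta_1)\delta_2$ as $\delta_2\to\infty$. By Lemmas~\ref{lem:P2-dimensionespace} and~\ref{lem:P2-majorationespacepolynome}, the number of linear conditions is at most $d(2\delta_1,p-1)\leq 2^{m^2}d(\delta_1,p-1)\sim 2^{m^2}c_1(\delta_1)p$. With the choice $p=\lfloor\delta_1\delta_2/2^{m^2+2}\rfloor$ we have $2^{m^2}p\leq \delta_1\delta_2/4$, strictly smaller than $(\delta_1+1)\delta_2$; hence for $\delta_2$ sufficiently large with respect to $\delta_1$ the parameter dimension strictly exceeds the condition dimension, and a non-zero solution $(P_0,\ldots,P_{\delta_1})$ exists.

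It remains to upgrade $E_p\in \I$ to the asserted identity $E_p(\bTheta_k(z),z^{q^{k-k_0}})=0$ for every $k\geq k_0$. Writing $\bTheta_k(z)=C\bphi(z)A_{k-k_0}(z)$ with $C:=A_{k_0}(\alpha)\bphi(\xi)^{-1}$ a constant matrix, Lemma~\ref{lem:P2-phi2} immediately yields $E_p(\bphi(z)A_{k-k_0}(z),z^{q^{k-k_0}})=0$, but we need the version with $C$ inserted. I would therefore show that $\I$ is stable under the substitution $\Y\mapsto C\Y$. To see this, specialise Lemma~\ref{lem:P2-phi2} at $z=\xi$: the cocycle relation $A_k(\xi)=A_{k_0}(\alpha)^{-1}A_{k_0+k}(\alpha)$ rewrites $\bphi(\xi)A_k(\xi)=C^{-1}A_{k_0+k}(\alpha)$, so $P(C^{-1}A_\ell(\alpha),\alpha^{q^\ell})=0$ for every $P\in \I$ and every $\ell\geq k_0$, i.e.\ $\I$ is stable under $\Y\mapsto C^{-1}\Y$. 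Since the variety $\mathcal V=V(\I)$ of relation matrices is affine with finitely many irreducible components and $C$ acts as a linear isomorphism on $\M_m$, the inclusion $V\subseteq CV$ provided by this one-sided stability forces equality, hence $\I$ is also stable under $\Y\mapsto C\Y$. Consequently $E_p(C\Y,z)\in \I$, and a second application of Lemma~\ref{lem:P2-phi2} to this new polynomial at level $\ell=k-k_0$ delivers the desired identity for every $k\geq k_0$.

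The hard part will be exactly this final upgrade. The natural vanishing produced by Lemma~\ref{lem:P2-phi2} sits at $\bphi(z)A_\ell(z)$ rather than at $\bTheta_k(z)=C\bphi(z)A_{k-k_0}(z)$, and gracefully handling the constant prefactor $C$ is where the variety-theoretic description of relation matrices (and hence Hilbert's Nullstellensatz) really enters.
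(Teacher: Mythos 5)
Your pigeonhole construction and dimension count match the paper's; the genuine difference lies in how you propagate the vanishing to $\bTheta_k(z)=C\bphi(z)A_{k-k_0}(z)$, where $C:=A_{k_0}(\alpha)\bphi(\xi)^{-1}$. You impose $E_p\in\I(2\delta_1,p-1)$ and must then prove that $\I$ is stable under $\Y\mapsto C\Y$. The paper sidesteps this entirely by introducing $\J(\delta_1,\delta_2):=\{P: P(C\Y,z)\in\I(\delta_1,\delta_2)\}$, the preimage of $\I(\delta_1,\delta_2)$ under the invertible substitution $\Y\mapsto C\Y$ (hence of the same $\Q$-dimension), and imposing $E_p\in\J(2\delta_1,p-1)$; Lemma~\ref{lem:P2-phi2} applied to $E_p(C\Y,z)\in\I$ then gives the conclusion with no stability statement needed. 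Your detour is correct as far as it goes: specializing Lemma~\ref{lem:P2-phi2} at $z=\xi$ is legitimate for $k$ large (since $\bphi$ is analytic at $\xi$ by Lemma~\ref{lem:P2-zeroMasserextended} and the $\Q(z)$-coefficients of a given $P\in\I$ have no poles at $\alpha^{q^{k_0+k}}$ for $k\gg1$), and it does give the one-sided inclusion $\sigma(\I)\subseteq\I$ for $\sigma:P\mapsto P(C^{-1}\Y,z)$, which Noetherianity of $\Q(z)[\Y]$ upgrades to equality. But your stated justification, ``finitely many irreducible components,'' is not the right principle; what does the work is the ascending chain condition on ideals, equivalently the stabilisation of the descending chain $V\supseteq C^{-1}V\supseteq C^{-2}V\supseteq\cdots$ of closed subvarieties. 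Note too that the Noetherian step is dispensable here: your inclusion gives $\J(u,v)\subseteq\I(u,v)$, and since these have equal finite $\Q$-dimension by the paper's substitution argument, they coincide by linear algebra alone. In short, your proof establishes the extra fact that the ideal $\I$ is invariant under conjugation by $C$; the paper is organised precisely so as never to need it, and the resulting argument is shorter and self-contained at the level of finite-dimensional vector spaces.
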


\begin{proof}
Set  
$$
\J(\delta_1,\delta_2) := 
\{ P \in \Q[z,\Y]\ : \ P( A_{k_0}(\alpha)\bphi(\alpha^{q^{k_0}})^{-1}\Y,z )\in \I(\delta_1,\delta_2)\}\,  .
$$
The $\Q$-vector spaces  $\J(\delta_1,\delta_2)$ and $\I(\delta_1,\delta_2)$ have same dimension.  
This  follows directly from the fact that the map  
$$
\begin{array}{ccc} \Q[\Y,z]_{\delta_1,\delta_2} & \rightarrow & \Q[\Y,z]_{\delta_1,\delta_2}
\\ P(\Y,z) & \mapsto & P( A_{k_0}(\alpha)\bphi(\alpha^{q^{k_0}})^{-1}\Y,z )\, 
\end{array} 
$$
is an isomorphism, the matrix $ A_{k_0}(\alpha)\bphi(\alpha^{q^{k_0}})^{-1}$ being invertible.  
Furthermore, we have  
\begin{equation}\label{eq:P2-thetazero}
P(\bTheta_k(z),z^{q^{k-k_0}})=0\,, \quad\quad  \forall P\in \J(\delta_1,\delta_2),\; \forall k\geq k_0\,.
\end{equation} 
Indeed, if 
 $P \in \J(\delta_1,\delta_2)$, then  
$P( A_{k_0}(\alpha)\bphi(\alpha^{q^{k_0}})^{-1}\Y,z )\in \I(\delta_1,\delta_2)$, and Lemma 
 \ref{lem:P2-phi2} implies that  
$$
P( A_{k_0}(\alpha)\bphi(\alpha^{q^{k_0}})^{-1}\bphi(z)A_k(z),z^{q^k} )=0\, , 
\quad\quad \forall k \geq 0\,.
$$
For $k \geq  k_0$, replacing $k$ by $k-k_0$ in the previous equality, we obtain that 
$$
P( A_{k_0}(\alpha)\bphi(\alpha^{q^{k_0}})^{-1}\bphi(z) A_{k-k_0}(z),z^{q^{k-k_0}} )=0\, .
$$
By \eqref{eq:P2-theta}, we thus have $P(\bTheta_k(z),z^{q^{k-k_0}})=0$.

Let $p$ be as in the lemma and let us consider  
the three $\Q$-linear maps: 
\begin{eqnarray*}
&\left\{\begin{array}{cc}
\left(\I^{\perp}(\delta_1,\delta_2)\right)^{\delta_1+1}
\\
(P_0(\Y,z),\ldots,P_{\delta_1}(\Y,z))
\end{array} \right.&
\\
& \big{\downarrow}&
 \\
& \left\{
\begin{array}{c} \Q[\Y]_{2\delta_1}[[z]]
\\ E(\Y,z) := \sum_{j=0}^{\delta_1} P_j(\Y,z)F(\Y,z)^j
\end{array} \right.&
\\
&\big{\downarrow}&
\\
&\left\{\begin{array}{c} \Q[\Y,z]_{2\delta_1,p-1} 
\\ 
 E_p(\Y ,z)
 \end{array}\right.&
 \\
&\big{\downarrow}&
\\
&\left\{\begin{array}{c}  \Q[\Y,z]_{2\delta_1,p-1}/\J(2\delta_1,p-1) 
\\ 
 E_p(\Y,z) \mod \J(2\delta_1,p-1)
 \end{array}\right.&
\end{eqnarray*}
Note that these maps are well-defined.  
By Lemma  \ref{lem:P2-dimensionespace}, the dimension of the $\Q$-vector space $\I^\perp(\delta_1,\delta_2)$
is at least equal to $\frac{c_1(\delta_1)}{2}\delta_2$, assuming  that $\delta_2$ is large enough.  
It follows that 
\begin{equation}\label{eq:P2-dimev}
\dim_{\Q}\left(\left(\I^{\perp}(\delta_1,\delta_2)\right)^{\delta_1+1}\right)\geq 
 \frac{c_1(\delta_1)}{2}(\delta_1+1)\delta_2 \,.
\end{equation}
For every pair of non-negative integers $(u,v)$, set 
$$
\overline{\J}(u,v):= \Q[\Y,z]_{u,v}/\J(u,v)\,.
$$  
Since $\J(\delta_1,\delta_2)$ and $\I(\delta_1,\delta_2)$ have same dimension, 
Lemma   \ref{lem:P2-majorationespacepolynome} implies that 
$$
\dim_{\Q} \overline{\J}(2\delta_1,p-1)\leq 
2^{m^2} \dim_{\Q} \overline{\J}(\delta_1,p-1)\, .
$$
Now, if  $\delta_2$ is sufficiently large, Lemma   \ref{lem:P2-dimensionespace} ensures that 
$$
\dim_{\Q} \overline{\J}(\delta_1,p-1) \leq 2c_1(\delta_1)p \,.
$$ 
On the other hand, the choice of $p$ ensures that 
$$
2^{m^2}\left(2c_1(\delta_1)p\right) <  \frac{c_1(\delta_1)}{2}(\delta_1+1)\delta_2
$$ 
and \eqref{eq:P2-dimev} implies that   
$$
\dim_{\Q}\left( \left(\I^{\perp}(\delta_1,\delta_2)\right)^{\delta_1+1}\right) > \dim_{\Q} \left( \Q[\Y,z]_{2\delta_1,p-1}/\J(2\delta_1,p-1)\right)\,.
$$
Hence the $\Q$-linear map defined by  
$$(P_0(\Y,z),\ldots,P_{\delta_1}(\Y,z))\mapsto E_p(\Y,z)\mod \J(2\delta_1,p-1)$$ 
has a non-trivial kernel.  
We deduce the existence of polynomials $P_0,\ldots,P_{\delta_1}$ in $\I^{\perp}(\delta_1,\delta_2)$, 
not all zero, 
such that 
$E_p \in \J(2\delta_1,p-1)$.  
By \eqref{eq:P2-thetazero}, we obtain that $E_p(\bTheta_k(z),z^{q^{k-k_0}})=0$ for all $k \geq  k_0$.  
This ends the proof. 
\end{proof}

Let $E\in \Q[\Y][[z]]$ be a formal power series satisfying the properties of  Lemma  \ref{lem:P2-fonctionauxiliaire} and 
let $v_0$ be the smallest 
index such that  the polynomial $P_{v_0}$ is non-zero.  Then the formal power series  
\begin{equation*}\label{eq:P2-E}
\mathfrak E(\Y,z) := \sum_{j\geq v_0} P_j(\Y,z)F(\Y,z)^{j-v_0} \in \Q[\Y][[z]]\,
\end{equation*}
is the auxiliary function that we were looking for.  Note that  we have 
\begin{equation}\label{eq:P2-EE'}
\mathfrak E(\Y,z)F(\Y,z)^{v_0} = E(\Y,z)\,.
\end{equation}

\medskip \noindent
\textbf{Warning.}
The function $\mathfrak E(\bTheta_k(z),z^{q^{k-k_0}}) $ can be though of as a 
simultaneous Pad\'e approximant of  type I for the first 
$\delta_1$th powers of  $F(\bTheta_k(z),z^{q^{k-k_0}})$. However, we have to be careful:  
$F(\bTheta_k(z),z^{q^{k-k_0}})$ it is not necessarily a power series in $z$. It is a linear combination of  
$f_1(z^{q^{k-k_0}}),\ldots,f_m(z^{q^{k-k_0}})$ 
whose coefficients are only known to be algebraic over $\Q(z)$. We only know that 
$F(\bTheta_k(z),z^{q^{k-k_0}})$ is analytic in some neighborhood of the point $\xi$.

 \medskip
 \noindent \textbf{Step (UB).}
 The aim of this step is to prove that there exists a real number $c_2>0$ such that  
 \begin{equation}\label{eq:P2-majprinc}
 \vert \mathfrak E(A_k(\alpha),\alpha^{q^k}) \vert \leq e^{-c_2q^k\delta_1\delta_2 }\, ,
 \quad\quad  \forall k \gg \delta_2 \gg \delta_1\,.
 \end{equation}
 
 \medskip
 
 According to Remark \ref{rem:P2-thetak}, the  functions  
 $\mathfrak E(\bTheta_k(z),z^{q^{k-k_0}})$, $F(\bTheta_{k_0}(z),z)^{v_0}$, and 
 $E(\bTheta_k(z),z^{q^{k-k_0}})$ are 
 all analytic  on the disc $\mathcal D(\xi, r_k)$. Hence they respectively have power series expansions of the form  
 \begin{eqnarray}
 \label{eq:P2-developpementE}
 \quad \mathfrak E(\bTheta_k(z),z^{q^{k-k_0}}) &=:&\sum_{\lambda =0}^{+\infty} 
 e_{\lambda,k} (z-\xi)^{\lambda}\, , \quad e_{\lambda,k} \in \C\,, \\
 \label{eq:P2-developpementF}
 F(\bTheta_{k_0}(z),z)^{v_0}&=:& \sum_{\lambda =0}^{+\infty} a_{\lambda}(z-\xi)^\lambda\, , \quad a_{\lambda} \in \C\,,\\
 \label{eq:P2-developpement_E'}
 \quad E(\bTheta_k(z),z^{q^{k-k_0}})&=:& 
 \sum_{\lambda =0}^{+\infty} \epsilon_{\lambda,k} (z-\xi)^{\lambda}\, , \quad \epsilon_{\lambda,k} \in \C\,.
 \end{eqnarray}
 
 We need the following result whose proof is postponed after the end of the argument for proving our main upper bound \eqref{eq:P2-majprinc}.

 \begin{lem}\label{lem:P2-majo_epsilonk} Let $p$ be defined as in Lemma \ref{lem:P2-fonctionauxiliaire}. 
 	There exists a real number $\gamma>0$ that does not depend on the integers  
 	$\delta_1,\delta_2$, $\lambda$, and $k$,  and such that 
 	$$
 	\vert \epsilon_{\lambda,k}\vert \leq e^{-\gamma q^k p}\, ,\quad\quad \forall k \gg \delta_2 \gg \delta_1,\lambda\,.
 	$$
 \end{lem}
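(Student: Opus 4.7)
My plan is to mimic the standard upper-bound step in transcendence theory: leverage the vanishing $E_p(\bTheta_k(z), z^{q^{k-k_0}}) = 0$ to produce a zero of very high order at $z = 0$ when we view the auxiliary function as a Taylor series in the outer $z$, and then extract rapid decay of the Taylor coefficients at $\xi$ via Cauchy's inequality.

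First, I would split $E(\Y, z) = E_p(\Y, z) + z^p G(\Y, z)$ with $G(\Y, z) \in \Q[\Y][[z]]$, so that substituting $(\Y, z) \mapsto (\bTheta_k(z), z^{q^{k-k_0}})$ and invoking Lemma \ref{lem:P2-fonctionauxiliaire} gives the identity
$$
E(\bTheta_k(z), z^{q^{k-k_0}}) \;=\; z^{p q^{k-k_0}} \, G(\bTheta_k(z), z^{q^{k-k_0}}) \,.
$$
The factor $z^{p q^{k-k_0}}$ drives the exponential decay: on a disc $\mathcal D(\xi, \rho)$ with $\rho > 0$ chosen so that $|\xi|+\rho < 1$ and all relevant functions are analytic there, it is bounded by $(|\xi|+\rho)^{p q^{k-k_0}} = e^{-c_0 q^{-k_0}\, p\, q^k}$ with $c_0 := -\log(|\xi|+\rho) > 0$.

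Next I would estimate $G(\bTheta_k(z), z^{q^{k-k_0}})$ uniformly on $\mathcal D(\xi, \rho)$. By Remark \ref{rem:P2-thetak}, and since for $z$ in this disc the iterated arguments $z^{q^{j-k_0}}$ accumulate at $0$, each factor $A(z^{q^{j-k_0}})$ defining $\bTheta_k(z)$ is uniformly close to $A(0)$; hence $\|\bTheta_k(z)\| \leq H^k$ for a fixed $H > 0$ independent of $k$. Combined with the degree bounds on the $P_j$ and the uniform boundedness of the $f_i$ on the disc, this yields an estimate $|G(\bTheta_k(z), z^{q^{k-k_0}})| \leq C(\delta_1, \delta_2)\, H^{c(\delta_1) k}$ for some constants depending only on the indicated variables. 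Applying Cauchy's inequality on the circle $|z - \xi| = \rho/2$ then gives
$$
|\epsilon_{\lambda, k}| \;\leq\; (\rho/2)^{-\lambda}\, e^{-c_0 q^{-k_0}\, p\, q^k}\, C(\delta_1, \delta_2)\, H^{c(\delta_1) k} \,.
$$
For $k \gg \delta_2 \gg \delta_1, \lambda$, since $p \sim \delta_1 \delta_2/2^{m^2+2}$ grows at least linearly in $\delta_2$, the exponent $p q^k$ dwarfs both $k$ and $\lambda$; choosing $\gamma := c_0 q^{-k_0}/2$ absorbs all subordinate factors and produces $|\epsilon_{\lambda, k}| \leq e^{-\gamma q^k p}$, with $\gamma$ independent of $\delta_1, \delta_2, \lambda, k$.

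The hard part will be making the analytic estimates truly uniform in $k$, with the correct geometric rather than worse control on $\|\bTheta_k(z)\|$ when $\bTheta_k(z)$ is a product of $O(k)$ matrices $A(z^{q^{j-k_0}})$. The crucial observation is that since $|\xi| + \rho < 1$, the arguments $z^{q^{j-k_0}}$ accumulate at $0$ uniformly on $\mathcal D(\xi, \rho)$, so all but finitely many of these matrix factors are arbitrarily close to $A(0)$, giving a bound $H^k$ that is dwarfed by the main decay $e^{-\gamma q^k p}$ in the stated asymptotic regime.
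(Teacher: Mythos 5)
Your overall strategy matches the paper's (split $E$ into its truncation $E_p$ plus a tail $G$, exploit $E_p(\bTheta_k(z),z^{q^{k-k_0}})=0$, then use Cauchy-type estimates for the Taylor coefficients at $\xi$). However, there is a genuine gap in the analytic estimate at the heart of your Step (UB): the bound $\|\bTheta_k(z)\|\leq H^k$ on a fixed disc around $\xi$ is false in general. The matrix $A(z)$ is only assumed to lie in ${\rm GL}_m(\Q(z))$, so it may well have a pole at $z=0$. When it does, the factor $A(z^{q^{j-k_0}})$ in $\bTheta_k(z)=\bTheta_{k_0}(z)A(z)A(z^q)\cdots A(z^{q^{k-k_0-1}})$ has norm of size $|z|^{-vq^{j-k_0}}$ for $z$ near $\xi$ (where $v$ is the order of the pole), and the product over $j\leq k-k_0-1$ yields $\|\bTheta_k(z)\|=e^{\Theta(q^{k})}$, not a geometric-in-$k$ bound. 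Your intuition that the factors converge to $A(0)$ presupposes $A$ is regular at $0$, which is not given. This is precisely why the paper, in its proof, introduces an integer $v\geq 0$ with $z^vA(z)$ holomorphic at $0$ and carefully tracks the contribution $\xi^{-vq^{k-k_0}}$, arriving at the bound $\vert\theta_{\lambda,i,k}\vert\leq e^{\gamma_5(\lambda)\delta_1 q^k}$ for the Taylor coefficients of $\bTheta_k(z)^{\bnu_i}$.

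This matters for the final comparison. With the correct bound, the competition is between $e^{-c\,p\,q^k}$ (from the high-order vanishing) and $e^{+c'\delta_1 q^k}$ (from the matrix growth), so what you actually need is $p\gg\delta_1$, which is exactly what $\delta_2\gg\delta_1$ delivers via $p=\lfloor\delta_1\delta_2/2^{m^2+2}\rfloor$. Your proposal instead argues that $pq^k$ ``dwarfs'' $k$, a much weaker and irrelevant comparison that would make the hypothesis $\delta_2\gg\delta_1$ superfluous -- a warning sign that something was missed. A secondary point: because of the same singularity at $0$, the paper avoids claiming a single radius $\rho$ (independent of $k$) with a uniform sup-norm Cauchy estimate; it instead works directly with the Taylor coefficients $h_{\lambda,i,k}$ and $\theta_{\lambda,i,k}$ at $\xi$ and multiplies them out term by term. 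Your Cauchy-on-a-circle argument can likely be salvaged (the singular set accumulates on the unit circle and at $z=0$, both away from $\xi$), but you would still need the $e^{cq^k}$-type bound on the circle, not $H^k$, and then the same conclusion about needing $p\gg\delta_1$.
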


 Using \eqref{eq:P2-iterationF}, we get that  
 $$
 F(\bTheta_k(z),z^{q^{k-k_0}}) = F(\bTheta_{k_0}(z),z)\,, \;\; \forall k\geq k_0\,.
 $$ 
 By \eqref{eq:P2-EE'},  we thus have 
 \begin{equation}\label{eq:P2-factorisationE'}
 \mathfrak E(\bTheta_k(z),z^{q^{k-k_0}})F(\bTheta_{k_0}(z),z)^{v_0}=E(\bTheta_k(z),z^{q^{k-k_0}}) \,,
 \end{equation}
 for all $k \geq  k_0$ and all $z\in\mathcal D(\xi,r_k)$. 
 We use now our assumption that $F(\bTheta_{k_0}(z),z)$ is non-zero (see \eqref{eq:P2-notzero}). There thus exists at least  
 one non-zero coefficient $a_{\lambda}$ in \eqref{eq:P2-developpementF}. Let us consider the least integer $\lambda_0$ 
 such that $a_{\lambda_0}\not= 0$. 
 Identifying the coefficients of   
 $(z - \xi)^{\lambda_0}$ in the power series expansion of both sides of  
 \eqref{eq:P2-factorisationE'} with the help of  \eqref{eq:P2-developpementE},  \eqref{eq:P2-developpementF}, and  \eqref{eq:P2-developpement_E'}, 
 we obtain that  
 \begin{equation}\label{eq: eeps}
 e_{0,k}a_{\lambda_0} = \epsilon_{\lambda_0,k}\, , \;\; \forall k\geq k_0\,.
 \end{equation}
 Since  $\bTheta_{k}(\xi)=A_k(\alpha)$ (see \eqref{eq:P2-theta}) and $a_{\lambda_0}$ depends only on $\delta_1$ but not on $k$, we infer from 
 Lemma~\ref{lem:P2-majo_epsilonk}, Equality \eqref{eq: eeps},  
 and the definition of $p$ (see Lemma~\ref{lem:P2-fonctionauxiliaire}), 
 the existence of a  real number  $c_2>0$ that does not depend on  
 $\delta_1$, $\delta_2$, and $k$, 
 such that 
 \begin{eqnarray}\label{eq:P2-majoEb}
 \nonumber  \left\vert \mathfrak E(A_k(\alpha),\alpha^{q^k})\right\vert &=&
 \left\vert \mathfrak E(\bTheta_k(\xi),\xi^{q^{k-k_0}}) \right\vert \\
 \nonumber  &=& \vert e_{0,k} \vert 
 \\ \nonumber &=&\vert \epsilon_{\lambda_0,k}\vert/\vert a_{\lambda_0} \vert 
 \\ \nonumber & \leq & e^{-c_2 q^k\delta_1\delta_2}\, ,\quad\quad \forall k\gg \delta_2 \gg \delta_1\,.
 \end{eqnarray}
 This proves the upper bound \eqref{eq:P2-majprinc}, as wanted.

 \medskip 
 
 Now, it remains to prove Lemma \ref{lem:P2-majo_epsilonk}. 
 
 \begin{proof}[Proof of Lemma \ref{lem:P2-majo_epsilonk}] 
 	Set 
 	$$
 	G(\Y,z):=E(\Y,z) - E_p(\Y,z)\in \Q\{z\}[\Y] \, ,
 	$$ 
 	where $p$ is defined as in Lemma  \ref{lem:P2-fonctionauxiliaire}.
 	By Lemma  \ref{lem:P2-fonctionauxiliaire}, we have  
 	\begin{equation}\label{eq:P2-GE'}
 	G(\bTheta_k(z),z^{q^{k-k_0}})=E(\bTheta_k(z),z^{q^{k-k_0}}) \,,\quad\quad \forall k\geq k_0\,.
 	\end{equation} 
 	Let $\bnu_1,\ldots,\bnu_s$ denote an enumeration of all the $m \times m$ matrices with coefficients in the set  $\{0,1,\ldots,2\delta_1\}$. 
 	There exists a unique decomposition of the form 
 	$$
 	G(\Y,z) =: \sum_{i=1}^s \sum_{ \lambda = p}^\infty g_{\lambda,i} z^\lambda \Y^{\bnu_i}\, ,
 	$$
 	where $g_{\lambda,i} \in \Q$. 
 	For every $i$, $1\leq i \leq s$, we define the formal power series 
 	$$
 	G_i(z):=\sum_{\lambda = p}^\infty g_{\lambda,i} z^\lambda \in \Q[[z]]\, .
 	$$
 	By definition of $F(\Y,z)$, these series belong to $\Q[z,\f(z)]$. In particular, they are analytic on some 
 	disc  $\mathcal D( 0,r)$ with $r>r_1$ (where $r_1$ is defined at the beginning of Section \ref{subsubsec:P2 theta_k}). 
 	From the Cauchy-Hadamard inequality, there exists a positive real number $\gamma_1(\delta_1,\delta_2)$ 
 	such that 
 	\begin{equation}\label{eq:P2-majoration_g}
 	\vert g_{\lambda,i} \vert \leq \gamma_1(\delta_1,\delta_2)r_1^{-\lambda }\, , \quad\quad \forall \lambda \geq 0\,.
 	\end{equation}
 	For every $k \geq k_0$, $G_i(z^{q^{k-k_0}})$ can thus be written as 
 	\begin{equation}\label{eq:P2-developpement_G_Tk_0}
 	G_i(z^{q^{k-k_0}})=: \sum_{ \lambda =  q^{k-k_0} p}^\infty g_{\lambda,i,k} z^\lambda\, ,
 	\end{equation}
 	with  $g_{\lambda,i,k} \in \Q$. Furthermore, this power series is absolutely convergent 
 	on the disc $\mathcal D( 0,r_1)$. Since $r_1 \leq 1$, we deduce from  \eqref{eq:P2-majoration_g} that  
 	\begin{equation}
 	\label{eq:P2-majoration_gk}
 	\vert g_{\lambda,i,k} \vert \leq \gamma_1(\delta_1,\delta_2)r_1^{- \lambda  q^{k_0-k}  } \leq \gamma_1(\delta_1,\delta_2)r_1^{- \lambda  }  \, ,
 	\end{equation}
 	for all $\lambda\geq 0$, $i\in\{1,\ldots,s\}$, and $k \geq k_0$. 
 	On the other hand, every function  
 	$G_i(z^{q^{k-k_0}})$, $1\leq i \leq s$, $k \geq k_0$, is analytic on the disc $\mathcal D(\xi,r_k)$. 
 	Thus, we can write 
 	\begin{equation}\label{eq:P2-developpement_G_alpha}
 	G_i(z^{q^{k-k_0}})=: \sum_{\lambda =0}^\infty h_{\lambda,i,k} (z - \xi)^\lambda \, ,
 	\end{equation}
 	where $h_{\lambda,i,k} \in \C$. 
 	Since by assumption $\mathcal D(\xi,r_k) \subset \mathcal D( 0,r_1)$, 
 	the two power series expansions \eqref{eq:P2-developpement_G_Tk_0} and \eqref{eq:P2-developpement_G_alpha} 
 	match on $\mathcal D(\xi,r_k)$. Using the equality  	
	\begin{equation}\label{eq:dev_z_xi}
 	z^{\gamma}=((z-\xi)+ \xi)^\gamma = \sum_{\lambda =0}^{\gamma} \binom{\gamma}{\lambda} \xi^{\gamma-\lambda} (z-\xi)^{\lambda}
 	\end{equation}
 	and identifying, for every $\lambda \geq 0$, the coefficients of   
 	$(z - \xi)^{\lambda}$ in \eqref{eq:P2-developpement_G_Tk_0} and \eqref{eq:P2-developpement_G_alpha}, 
 	we deduce that 
 	\begin{equation}\label{eq:P2-identit_gk}
 	h_{\lambda,i,k} = \sum_{\substack{ \gamma  \geq  q^{k-k_0} p 
 			\\ \gamma \geq \lambda}} \binom{\gamma}{\lambda} g_{\gamma,i,k} \xi^{\gamma-\lambda}  \, .
 	\end{equation} 
 	For $\gamma \geq \lambda$, one has
 	\begin{equation}\label{eq:P2-majbinom}
 	\binom{\gamma}{\lambda} =  \frac{\gamma!}{(\gamma-\lambda)!\lambda!}
 	\leq \gamma^{\lambda} \, .
 	\end{equation}
 	Given $\lambda \geq 0$, we have that 
 	$ \lambda  < q^{k-k_0}p$ as soon as $k$ is large enough, and  
 	since $ \vert \xi \vert<r_1$, we infer from
 	\eqref{eq:P2-majoration_gk} and \eqref{eq:P2-identit_gk}
 	the existence of a real number $\gamma_2>0$ that  does not
 	depend on $\delta_1$, $\delta_2$, $\lambda$, and $k$,   
 	such that
 	\begin{equation}\label{eq:P2-majoration_g_alpha}
 	\vert h_{\lambda,i,k}\vert \leq   e^{-\gamma_2q^k p}\, , \quad\quad  \forall k \gg \delta_1,\delta_2,\lambda\,.
 	\end{equation}

 	Now, we proceed to bound the absolute value of the coefficients of the power series expansion in $z-\xi$ 
	of  $\bTheta_k(z)^{\bnu_i}$, $1\leq i \leq s$.
%
 	Given a power series $Q(z)\in \Q\{z\}$ and $k \geq 0$, 
 	we write 
 	$$
 	Q(z^{q^{k-k_0}})=: \sum_{\lambda = 0 }^\infty q_{\lambda,k}(z-
 	\xi)^\lambda\, .
 	$$
 	For all $k$ large enough, $\xi^{q^{k-k_0}}$ belongs to the domain of analyticity of $Q(z)$. Using again \eqref{eq:dev_z_xi} and \eqref{eq:P2-majbinom} we obtain that, for every $\lambda\geq 0$,
 	$\vert q_{\lambda,k} \vert = \mathcal O(1)$ as $k$ tends to infinity, where 
 	the underlying constant in the $\mathcal O$ notation depends both on $Q(z)$ and 
 	$\lambda$. Fix some $\lambda \geq 0$. Let $v\geq 0$ be an integer such that the coordinates of $z^vA(z)$ have no poles at $0$. The coordinates 
 	of $z^{v}A(z)$ are convergent power series at $0$, and the points $\xi^{q^{k-k_0}}$ belong to their domain of analyticity for $k$ large enough. 
	Then, the coefficients of $(z-\xi)^\lambda$ in the power series expansion in $z-\xi$ of each of 
 	the coordinates 
 	of $z^{vq^{k-k_0}}A(z^{q^{k-k_0}})$  belong to $\mathcal O(1)$ as $k$ tends to infinity. Using \eqref{eq:dev_z_xi}, we write
 	\begin{align*}
 	z^{-vq^{k-k_0}} =&\, \xi^{-vq^{k-k_0}} \left(1+ \sum_{\lambda= 1}^{vq^{k-k_0}} \binom{vq^{k-k_0}}{\lambda} \xi^{-\lambda} (z-\xi)^{\lambda}\right)^{-1}
 	\\ = &\, \xi^{-vq^{k-k_0}} + \\   &\quad \quad \sum_{\lambda=1}^\infty \left(  \xi^{-vq^{k-k_0}}\sum_{t=1}^\lambda \sum_{ \lambda_1+\cdots+\lambda_t= \lambda}  \prod_{i=1}^t \binom{vq^{k-k_0}}{\lambda_i} \xi^{-\lambda_i} \right) (z-\xi)^\lambda 
 		\\ =:&\, \sum_{\lambda=0}^\infty r_{\lambda,k} (z-\xi)^\lambda\,.
 	\end{align*}
 Using \eqref{eq:P2-majbinom}, we deduce the existence of a real number $\gamma_3>0$ which does not depend on $k$ and such that 
 $\vert r_{\lambda,k} \vert = \mathcal O(e^{\gamma_3q^k})$ as $k$ tends to infinity. It follows that the absolute value of the coefficient of $(z-\xi)^\lambda$,  in the power series expansion in $z-\xi$ of each of 
 	the coordinates 
 	of $A(z^{q^{k-k_0}})$, 
 	 belongs to $\mathcal O(e^{\gamma_3q^k})$ as $k$ tends to infinity,  where the underlying constant in the $\mathcal O$ notation depends on $\lambda$ but not on $\delta_1$, $\delta_2$, and $k$.

 	By Remark  \ref{rem:P2-thetak}, the monomial $\bTheta_k(z)^{\bnu_i}$
 	is analytic on  $\mathcal D(\xi,r_k)$ for every $i$, $1\leq i \leq s$,
 	and every $k \geq k_0$. 
 	Thus,  we can write 
 	\begin{eqnarray}\label{eq:P2-developpement_thetak}
 	\bTheta_k(z)^{\bnu_i}   
 	& =:&
 	\sum_{\lambda=0}^{+\infty} \theta_{\lambda,i,k}(z-\xi)^{\lambda}\, ,
 	\end{eqnarray}
 	where $\theta_{\lambda,i,k} \in \C$.     
 	Using the recurrence relation
 	$$
 	\bTheta_{k+1}(z)=\bTheta_k(z)A(z^{q^{k-k_0}}) \,,
 	$$
 	we obtain the existence of a real number 
 	$\gamma_4(\lambda)>0$ that does not depend on $i$, $\delta_1$, $\delta_2$, and $k$, such that
 	the absolute value of the coefficient of $(z-\xi)^\lambda$ in each of 
 	the coordinates 
 	of $\bTheta_k(z)$ is at most $e^{\gamma_4(\lambda)q^k}$. Since $\vert \bnu_i \vert \leq 2m^2\delta_1$ for each $i$, there exists a real number $\gamma_5(\lambda)>0$ that does not depend on $i$, $\delta_1$, $\delta_2$, and $k$, such that
 	\begin{equation}\label{eq:P2-theta_ik}
 	\vert  \theta_{\lambda,i,k} \vert < e^{\gamma_5(\lambda)\delta_1 q^k} \,, \quad\quad \forall i,\, 1\leq i \leq s, \, \forall k \geq k_0\,.
 	\end{equation}
 	From  \eqref{eq:P2-developpement_E'}, \eqref{eq:P2-GE'}, \eqref{eq:P2-developpement_G_alpha}, and \eqref{eq:P2-developpement_thetak}, we deduce  that 
 	\begin{equation}\label{eq:P2-hthetae}
 	\sum_{i=1}^s \left(\sum_{\lambda =0}^{+\infty} h_{\lambda,i,k} (z-\xi)^\lambda \right)\left(\sum_{\lambda =0}^{+\infty} \theta_{\lambda,i,k} (z-\xi)^\lambda \right)  =\sum_{\lambda = 0}^{+\infty} \epsilon_{\lambda,k} (z-\xi)^{\lambda} \,.
 	\end{equation}
 	Finally, identifying the coefficents of $(z-\xi)^{\lambda}$ in both sides of 
 	\eqref{eq:P2-hthetae}, we have  
 	$$
 	\epsilon_{\lambda,k} = \sum_{i=1}^s \sum_{\gamma=0}^{\lambda} h_{\gamma,i,k}\theta_{\lambda-\gamma,i,k}\, .
 	$$
 	Note that  $p\gg \delta_1$ when $\delta_2 \gg \delta_1$. 
 	Inequalities  \eqref{eq:P2-majoration_g_alpha} and \eqref{eq:P2-theta_ik} imply the  existence of a real number $\gamma_6>0$ that does not depend on $\delta_1$, $\delta_2$, $\lambda$, and $k$, and such that 
 	$$
 	\vert \epsilon_{\lambda,k}\vert \leq 
 	e^{-\gamma_6 q^k p}\, ,\quad \quad\forall k\gg \delta_2\gg\delta_1,\lambda\,.
 	$$
 	Setting $\gamma:=\gamma_6$, this ends the proof. 
 \end{proof}

\medskip
\noindent \textbf{Step (NV).}
Let us first recall that by \eqref{eq:P2-annulationFitere} we have 
$$
F(A_k(\alpha),\alpha^{q^k})=0 \, , \quad\quad \forall k \geq 0\,.
$$
By construction of our auxiliary function, we deduce that 
$$\mathfrak E(A_k(\alpha),\alpha^{q^k})=P_{v_0}(A_k(\alpha),\alpha^{q^k})\,.$$ 
Furthermore, since this construction ensures that $P_{v_0} \notin \mathcal I$, 
there exists an infinite set of positive integers 
$\mathcal E$ such that 
$$
P_{v_0}(A_k(\alpha),\alpha^{q^k}) \not=0 \, , \quad\quad \forall k\in\mathcal E\,.
$$
Without any loss of generality, we assume that $k\geq k_0$ for all  $k\in\mathcal E$. 

\medskip
\noindent \textbf{Step (LB).} Given an algebraic number $\beta$, we let $h(\beta)$ denote the absolute logarithmic Weil height of $\beta$ 
(see \cite[Chapter 3]{Miw} or Section  \ref{app:height} for a definition). In order to prove our lower bound, we only need the following basic properties of the Weil height. 
The use of the Weil height simplifies some computations but any other standard notion of height would also do the job. 
Given two algebraic numbers $\beta$ and $\gamma$, one has (see \cite[Property 3.3]{Miw}):
\begin{eqnarray}\label{eq: height}
\nonumber h(\beta+\gamma) &\leq& h(\beta)+h(\gamma)+\log 2\\
h(\beta\gamma)&\leq& h(\beta) +h(\gamma)\\
\nonumber h(\beta^n)&=& \vert n\vert  h(\beta), \quad \beta\not=0, \;n\in \mathbb Z\,. 
\end{eqnarray}
Let $P:=\sum_{\k\in K} a_{\k}\X^\k\in\Q[X_1,\ldots,X_n]$, and $\beta_1,\ldots,\beta_n \in \Q$, we deduce from \cite[Lemma 3.7]{Miw} that 
\begin{equation}\label{eq: height3}
h(P(\beta_1,\ldots,\beta_n))\leq \sum_{i=1}^n \log(1+\deg_{X_i}(P)) + \sum_{i=1}^n(\deg_{X_i}P)h(\beta_i) + \sum_{\k \in K}h(a_\k)\, .
\end{equation}
Given  a number field ${\bf k}$, we have the fundamental \emph{Liouville inequality} (see \cite[p.\ 82]{Miw}): 
\begin{equation}\label{eq: liouville}
\log \vert \beta\vert \geq -[{\bf k}:\mathbb Q]h(\beta)\,, \quad\quad \forall \beta\not=0\in {\bf k} \, .
\end{equation}

We are going to use \eqref{eq: liouville} to find a lower bound for $\vert \mathfrak E(A_k(\alpha),\alpha^{q^k})\vert$. A 
simple computation by induction on $k$ shows that the height of each coordinate of $A_k(\alpha)$ is at most $\gamma q^k$ for some $\gamma>0$ that does not depend on $k$ (see Section  \ref{app:height} for more detail). 
The polynomial $P_{v_0}(\Y,z)$ has degree at most $\delta_1$ in each indeterminate $y_{i,j}$ and degree at most  
$\delta_2$ in $z$. Furthermore, its coefficients are algebraic numbers which only depend  on the parameters $\delta_1$ and $\delta_2$. 
Using  \eqref{eq: height} and \eqref{eq: height3}, we obtain that 
 the height of the algebraic number 
$P_{v_0}(A_k(\alpha),\alpha^{q^k})$ 
is at most $cq^k\delta_2$ for some constant $c$ that does not depend on $\delta_1$, $\delta_2$, and $k$, assuming 
that $k\gg \delta_2\geq \delta_1$. 
Since these algebraic numbers belong to a fixed number field,  Liouville's inequality 
ensures the existence of  $c_{3}>0$ that does not depend on $\delta_1$, $\delta_2$, and $k$, and such that 
\begin{equation}
\label{eq:P2-minorationE}
\vert \mathfrak E(A_k(\alpha),\alpha^{q^k})\vert=|P_{v_0}(A_k(\alpha),\alpha^{q^k})| \geq  
e^{-c_{3}q^k\delta_2}\, , \quad\quad\forall k\in\mathcal E, k\gg \delta_2 \geq \delta_1\,. 
\end{equation}

We are now ready to end the proof of our key lemma.

\begin{proof}[Proof of Lemma \ref{lem:P2-nulliteF}.]
By Inequalities \eqref{eq:P2-majprinc} and \eqref{eq:P2-minorationE}, we obtain that   
$$
e^{-c_{3}q^k \delta_2} \leq \vert \mathfrak E(A_k(\alpha),\alpha^{q^k})\vert  
\leq e^{-c_{2}q^k \delta_1\delta_2 }\, ,\quad\quad \forall k\in \mathcal E, \, k\gg \delta_2\gg \delta_1\,.
$$
We deduce that 
$$
c_{3} \geq c_{2}\delta_1\, .
$$
Since $c_{2}$ and $c_{3}$ are positive numbers which do not depend on $\delta_1$, 
this provides a contradiction, as soon as $\delta_1$ is large enough.   
\end{proof}

\subsection{End of the proof of Theorem \ref{thm:P2-liftinglinear}}\label{sec:P2-endproof}
The coordinates of $\bphi(z)$ being algebraic over $\Q(z)$,  they 
generate a finite extension of $\Q(z)$. Let $\mathbf k\subset \mathbb A$ denote this extension 
and let $\gamma\geq 1$ be the degree of $\mathbf k$.   We recall that $\mathbb A$ is the algebraic closure of $\Q(z)$ in the field of Puiseux series. 
Choosing a primitive element $\varphi(z)$ in $\mathbf k$,  
we obtain a decomposition of the form  
\begin{equation}\label{eq:P2-decompositionPhi}
\bphi(z)=: \sum_{j=0}^{\gamma-1}  \bphi_j(z) \varphi(z)^{j}\, ,
\end{equation}
where the matrices $\bphi_j(z)$, $0 \leq j \leq \gamma-1$, have coefficients in $\Q(z)$. Let $d(z)\in\Q[z]$ denote a common denominator 
of the coordinates of the matrices $\bphi_j(z)$. Without any loss of generality, we can assume that in \eqref{eq: k0} the integer $k_0$ has been chosen large enough so that 
$\varphi(z)$ is analytic at $\xi=\alpha^{q^{k_0}}$ and $d(\alpha^{q^{k_0}})\neq 0$. Let $q(z)$ denote the least common multiple 
of the denominators of the coordinates of the matrix  
$ A_{k_0}^{-1}(z)$. Since $\alpha$ is assumed to be regular with respect to the Mahler system~\eqref{eq:P2-MahlerSystem}, 
we have that $q(\alpha)\not=0$. 

By Lemma  \ref{lem:P2-nulliteF}, we know that $F(\bTheta_{k_0}(z),z)=0$, and substituting 
$z^{q^{k_0}}$ for  $z$, we obtain that 
$F(\bTheta_{k_0}(z^{q^{k_0}}),z^{q^{k_0}})=0$. 
The function $F(\Y,z)$ being linear in $\Y$, we deduce that 
$$
F \left( \frac{d(z^{q^{k_0}})q(z)}
{d(\alpha^{q^{k_0}})q(\alpha)}\bTheta_{k_0}(z^{q^{k_0}}),z^{q^{k_0}}\right)=0 \, .
$$ 
Writing $\bTheta_{k_0}(z^{q^{k_0}})=\bTheta_{k_0}(z^{q^{k_0}}) A_{k_0}(z)^{-1} A_{k_0}(z)$  
and using   \eqref{eq:P2-iterationF}, we get that 
\begin{equation}\label{eq:P2-nulliteF}
F \left( \frac{d(z^{q^{k_0}})q(z)}
{d(\alpha^{q^{k_0}})q(\alpha)}\bTheta_{k_0}(z^{q^{k_0}})
A_{k_0}(z)^{-1},z\right)=0 \,.
\end{equation}
Now, let  us consider the linear form in $X_1,\ldots,X_n$ defined by:
$$
Q(z,\X):=\btau \left( \frac{d(z^{q^{k_0}})q(z)}
{d(\alpha^{q^{k_0}})q(\alpha)}
\bTheta_{k_0}(z^{q^{k_0}}) A_{k_0}(z)^{-1}\right)\X \, .
$$ 
Thus, the coefficient of each $X_i$ in $Q(z,\X)$ belongs to $\Q[z,\varphi(z^{q^{k_0}})]$. 
Since $\varphi(z^{q^{k_0}})$ is analytic at $\alpha$, the coefficients of $Q(z,\X)$ are analytic at $\alpha$. Moreover, since $\bTheta_{k_0}(\alpha^{q^{k_0}})=\bTheta_{k_0}(\xi)= A_{k_0}(\alpha)$, 
we deduce that 
$$
Q(\alpha,\X)= \btau \X= L(\X)\, .
$$
Finally, it follows from  \eqref{eq:P2-nulliteF} that 
$$
Q(z,\f(z))= 0\,.
$$
There is only one point left to address: we have lifted the linear relation between $f_1(\alpha),\ldots,f_m(\alpha)$ into a linear relation between $f_1(z),\ldots,f_m(z)$, but this relation is over $\Q[z,\varphi(z^{q^{k_0}})]$. 
Since the field  $\Q(z,\f(z))$ is a regular extension of $\Q(z)$ (see \cite[Lemme 3.2]{AF17}), we have that $\Q(z)(\f(z))$ and $\mathbb A$ 
are linearly disjoint over $\Q(z)$ (see \cite[Chapter VIII]{Lang}).   
Let $\delta$ denote the degree of $\varphi(z^{q^{k_0}})$ over $\Q(z)$, so that the functions 
$\varphi(z^{q^{k_0}})^j$, 
$0\leq j  \leq \delta-1$, are linearly independent over $\Q(z)$. Since $\Q(z)(\f(z))$ and $\mathbb A$ 
are linearly disjoint over $\Q(z)$, these functions remain linearly independent over 
$\Q(z)(\f(z))$. Thus, splitting the linear form $Q$ as  
$$
Q(z,\X)= :\sum_{j=0}^{\delta-1} Q_j(z,\X)\varphi(z^{q^{k_0}})^j\,,
$$
where $Q_{j}(z,\X)\in \Q[z,\X]$ are linear forms,
we deduce that  
$$Q_j(z,\f(z))=0\,, \quad\quad \forall j, \;0\leq j\leq \delta-1\,.$$ 
Finally, setting 
$$
\overline{L}(z,\X):= \sum_{j=0}^{\delta-1} Q_{j}(z,\X)\varphi(\alpha^{q^{k_0}})^j\in \Q[z,\X]\,,
$$
we obtain that 
$
\overline{L}(z,\f(z))=0$  and  $\overline{L}(\alpha,\X)=L(\X)$,  
as wanted. This ends the proof of Theorem \ref{thm:P2-liftinglinear}.
\qed

\section{From linear to algebraic relations}\label{sec:P2-Kroneck}

In this section, we end the proof of Theorem \ref{thm:P2-Philippon}. 
In order to deduce Theorem  \ref{thm:P2-Philippon} from Theorem \ref{thm:P2-liftinglinear}, 
the key observation is that, given $M_q$-functions $f_1(z),\ldots,f_m(z)$ related by a $q$-Mahler system, the $M_q$-functions obtained by considering all monomials of a given degree in $f_1(z),\ldots,f_m(z)$ are also related by a $q$-Mahler system with no additional singularity. 

Let us first recall some notation. Let $A=(a_{i,j})$ and $B$ be matrices with entries in a given commutative ring, with dimension, respectively, $(m,n)$ and $(p,q)$. The Kronecker product of $A$ and $B$ is the matrix $A \otimes B$, of size $(mp,nq)$ with block decomposition
$$
A \otimes B := \left(\begin{array}{ccc} a_{1,1}B & \cdots & a_{1,n}B \\ \vdots & \ddots & \vdots \\ a_{m,1}B & \cdots & a_{m,n}B \end{array}\right) \, .
$$
If $d \geq 1$ is an integer, we also let
$$
A^{\otimes d} := \underbrace{A \otimes \cdots \otimes A}_{d \text{ times }}\, ,
$$
denote the $d$th Kronecker power of the matrix $A$.

\begin{proof}[Proof of Theorem \ref{thm:P2-Philippon}] 
Let $d$ denote the total degree of $P$  and $\lambd_1,\ldots,\lambd_t$ be an enumeration of the set 
$\{\lambd \in (\Z_{\geq 0})^m\, : \, \vert \lambd \vert = d\}$. Then, we have 
$$
P =: \sum_{j=1}^t p_j \X^{\lambd_j}\, ,
$$
where $p_j \in \Q$ and  $\X:=(X_1,\ldots, X_m)$. 
Set $\f(z):=(f_1(z),\ldots,f_m(z))^\top$. The coordinates of the vector $\f(z)^{\otimes d}$ are precisely the monomials of degree $d$ in $f_1(z),\ldots,f_m(z)$, with some of them appearing several times (for example, the product $f_1(z)f_2(z)$ appears twice in $\f(z)^{\otimes 2}$). 
Using \cite[Lemma 4.2.10]{HJ94} or (i) of Lemma \ref{lem:P2-proprieteKronecker} in Section  \ref{app:kronecker} and a straightforward induction on $d$, we obtain that
\begin{equation}\label{eq: sys-kronecker}
\f(z)^{\otimes d} = A(z)^{\otimes d}\f(z^q)^{\otimes d}\, .
\end{equation}
Since $\alpha$ is a regular point with respect to the system \eqref{eq:P2-MahlerSystem} the matrix $A(z)$ is well-defined and invertible at $\alpha^{q^k}$ for all integers $k \geq 0$. The entries of the matrix $A(z)^{\otimes d}$ being products of the entries of $A(z)$, the matrix $A(z)^{\otimes d}$ is well-defined at $\alpha^{q^k}$ for all integers $k \geq 0$. Furthermore, since $\det A(\alpha^{q^k})\neq 0$ we have $\det A(\alpha^{q^k})^{\otimes d}  \neq 0$ (see \cite[Corollary  4.2.11]{HJ94} or (ii) of Lemma \ref{lem:P2-proprieteKronecker} in Section  \ref{app:kronecker}), for all integers $k \geq 0$. Hence $\alpha$ is a regular point with respect to the system \eqref{eq: sys-kronecker}.

For each $j$, $1\leq j \leq t$, let $\I_j \subset \{1,\ldots,m^d\}$ denote the set of integers $i$ for which the $i$th entry of $\X^{\otimes d}$ is $\X^{\lambd_j}$. For each $j$, we pick an integer $i_j$  in $\I_j$. 
Let $Y_1,\ldots,Y_{m^d}$ be a family of indeterminates and let us consider the linear form $L$ defined by  
$$
L(Y_1,\ldots,Y_{m^d}):= \sum_{j=1}^t p_j Y_{i_j}\, .
$$ 
We also let $g_1,\ldots,g_{m^d}$ denote the coordinates of $\f(z)^{\otimes d}$. By construction $g_{i}(z)=\f(z)^{\lambd_j}$ when $i \in \I_j$. 
 Thus,
$$
L(g_{1}(\alpha),\ldots,g_{m^d}(\alpha))=\sum_{j=1}^t p_jg_{i_j}(\alpha)=P(f_1(\alpha),\ldots,f_m(\alpha))=0\,.
$$
By Theorem \ref{thm:P2-liftinglinear}, there exists $\overline{L}\in\Q[z,Y_1,\ldots,Y_{m^d}]$ linear in $Y_1,\ldots,Y_{m^d}$, such that
\begin{equation*}\label{eq: Lbar}
\overline{L}(z,g_{1}(z),\ldots,g_{m^d}(z))=0 \quad \text{ and } \quad \overline{L}(\alpha,Y_1,\ldots,Y_{m^d})=L(Y_1,\ldots,Y_{m^d})\, .
\end{equation*}
Write $\overline{L}=:\sum_{i=1}^{m^d} l_i(z)Y_i$, where $l_0(z),\ldots,l_{m^d}(z) \in \Q[z]$. 
We deduce  that 
$$
l_i(\alpha)= \left\{\begin{array}{cl}
p_j & \text{ if } i = i_j \text{ for some } j,\, 1\leq j \leq t\,,
\\ 0 & \text{ otherwise}\,.
\end{array}\right.
$$
Now, set
$$
\overline{P}(z,X_1,\ldots,X_m) := \sum_{j=1}^t \left(\sum_{i \in \I_j} l_i(z) \right)\X^{\lambd_j}\, .
$$
On the one hand, we have
\begin{eqnarray*}
\overline{P}(z,f_1(z),\ldots,f_m(z)) &=&  \sum_{j=1}^t \left(\sum_{i \in \I_j} l_i(z) \right)\f(z)^{\lambd_j}
\\ & = & \sum_{i=1}^{m^d} l_i(z)g_i(z)
\\ & = & \overline{L}(z,g_{1}(z),\ldots,g_{m^d}(z))=0 \, ,
\end{eqnarray*} 
while, on the other hand, we have 
$$
	\overline{P}(\alpha,\X)= \sum_{j=1}^t \left(\sum_{i \in \I_j} l_i(\alpha) \right)\X^{\lambd_j}
	= \sum_{j=1}^t p_j \X^{\lambd_j}
=P(\X)\, .
$$
This ends the proof. 
\end{proof}

\section{Deducing Nishioka's theorem from the lifting theorem}\label{sec: hilbert}

In this section, we show how to deduce Nishioka's theorem from the lifting theorem. 

\begin{proof}[Proof of Theorem \ref{thm:P2-Nishioka}]
We first note that the inequality
   $$
   {\rm tr.deg}_{\Q}(f_1(\alpha),\ldots,f_m(\alpha)) \leq {\rm
   tr.deg}_{\Q(z)}(f_1(z),\ldots,f_m(z))
   $$
   always holds. Hence we only have to prove that
   \begin{equation}
   \label{ineg:P2-degtrans}
  t_\alpha:= {\rm tr.deg}_{\Q}(f_1(\alpha),\ldots,f_m(\alpha)) \geq {\rm
   tr.deg}_{\Q(z)}(f_1(z),\ldots,f_m(z))=:t_z\, .
   \end{equation}
    Let $d\geq 0$ be an integer. We let
   $\varphi_\alpha(d)$ denote the dimension of the $\Q$-vector space spanned by the 
   monomials of degree at most $d$ in $f_1(\alpha),\ldots,f_m(\alpha)$. 
   We also let $\varphi_z(d)$ denote the dimension of the $\Q(z)$-vector space 
   spanned by the monomials of degree at most $d$ in 
   $f_1(z),\ldots,f_m(z)$. 
   Note that the functions $1,f_1(z),\ldots,f_m(z)$ are related by the $q$-Mahler system of size $m+1$:
\begin{equation}\label{eq: systinho}
\left(\begin{array}{c}1\\ f_1(z) \\ \vdots \\ f_m(z)\end{array}\right)
=
\left(\begin{array}{c|ccc}1&&& \\ \hline &&& \\ &&A(z)&\\&&&  \end{array}\right)
\left(\begin{array}{c}1\\f_1(z^q) \\ \vdots \\ f_m(z^q)\end{array}\right) \,.
\end{equation}
Furthermore, the point $\alpha$ remains regular with respect to this new system. 
   Applying Theorem \ref{thm:P2-Philippon} to \eqref{eq: systinho}, we obtain that
   \begin{equation}\label{eq:P2-hilbertserre}
   \varphi_\alpha(d) \geq \varphi_z(d),\, \quad \forall d\geq 0\, .
   \end{equation}
    By a result of Hilbert,  $\varphi_\alpha(d)$ and  $\varphi_z(d)$ are polynomials in $d$ of degree res\-pectively equal to $ t_\alpha$  and $t_z$ when $d\gg 1$ (see, for instance, the discussion around the Hilbert-Serre theorem in \cite[p.\ 232]{ZS}). Thus, there exist two positive real numbers $\beta$ and $\gamma$ such that 
$$
\varphi_\alpha(d) \leq \beta d^{t_\alpha} \ \text{ and } \ \varphi_z(d) \geq \gamma d^{t_z}\, ,\quad \quad \forall d \gg 1 \,.
$$ 
Using \eqref{eq:P2-hilbertserre}, we deduce \eqref{ineg:P2-degtrans}  as wanted. 
\end{proof}

\begin{rem}\label{rem:Hilbert}
In the proof of Theorem \ref{thm:P2-Nishioka}, we do not need the full strength of Hilbert's result. Suitable estimates  for $\varphi_\alpha(d)$ and  
$\varphi_z(d)$ can be easily achieved by elementary means (see Section \ref{app:Hilbert}).
\end{rem}

\begin{rem}
At the end of our proof of Theorem~\ref{thm:P2-Philippon}, we used the fact that the field extension $\Q(z,f_1(z),\ldots,f_m(z))$ is a regular extension of $\Q(z)$. We stress that this argument is not needed to deduce Nishioka's theorem. Indeed, 
without using it, we still obtain that every $\Q$-linear relation between $f_1(\alpha),\ldots,f_m(\alpha) $ can be lifted into a linear relation over the algebraic closure $\mathbb A$ of $\Q(z)$ between $f_1(z),\ldots,f_m(z)$. Then we could reproduce the previous argument, just replacing $\Q(z)$ by $\mathbb A$. We would derive the main result since 
$$
 {\rm tr.deg}_{\Q(z)}(f_1(z),\ldots,f_m(z))= {\rm tr.deg}_{\mathbb A}(f_1(z),\ldots,f_m(z)) \,,
$$
$\mathbb A$ being by definition algebraic over $\Q(z)$. 
\end{rem}


\appendix\label{App}

\section{Addendum to the proofs of Theorems \ref{thm:P2-Nishioka} and 
\ref{thm:P2-Philippon} }

In Sections \ref{app:dimension} and \ref{app:height}, we provide more details about some  auxiliary results used in the proof of Theorem \ref{thm:P2-liftinglinear}. 
We also give in Sections \ref{app:kronecker} and \ref{app:Hilbert} 
the proof of two elementary auxiliary results that can be used to make  the proof  of Theorems \ref{thm:P2-Nishioka} and 
\ref{thm:P2-Philippon} as elementary and self-contained as possible.

\subsection{Computation of the dimension of $\mathcal I^\perp(\delta_1,\delta_2)$}\label{app:dimension}
We provide here a more detailed argument for the proof of  Lemma \ref{lem:P2-dimensionespace}. 

\begin{proof}[Proof of Lemma \ref{lem:P2-dimensionespace}]

Set $h:=(\delta_1+1)^{m^2}$ and let $\bnu_1,\ldots,\bnu_h$ denote an enumeration of the set of all  
matrices in ${\mathcal M}_m(\Z_{\geq 0})$ whose entries are at most $\delta_1$. Any polynomial $P \in \Q(z)[\Y]_{\delta_1}$ has a unique decomposition of the form    
$$
P(\Y,z)=: \sum_{j=1}^h p_{j}(z) \Y^{\bnu_j}\, ,
$$
where $p_{j}(z) \in \Q(z)$, $1 \leq j \leq h$. 
Since, by definition, $\I(\delta_1)$ does not contain any non-zero elements of $\Q$, it is a strict $\Q(z)$-subspace of 
$\Q(z)[\Y]_{\delta_1}$. 
Thus, there exist an integer $d\geq 1$ and $d$ vectors of polynomials $(b_{i,1}(z),\ldots,b_{i,h}) \in \Q[z]^h$, $1\leq i \leq d$, which are linearly independent over $\Q(z)$ and such that  
for all $p_1(z),\ldots,p_h(z) \in \Q(z)$:
\begin{equation}\label{eq:App_P2-conditionI}
\sum_{j=1}^h p_j(z)\Y^{\bnu_j} \in \I(\delta_1) \Leftrightarrow \sum_{j=1}^h b_{i,j}(z)p_j(z) = 0 
\;\;\;\; \forall i,\; 1 \leq i \leq d\, .
\end{equation}
Since none of the vectors $(b_{i,1}(z),\ldots,b_{i,h}(z))$ is zero, we can choose the polynomials $b_{i,j}(z)$ 
so that for every $i$, $1\leq i\leq d$, there exists $j_i$, $1\leq j_i\leq h$, such that 
\begin{equation}\label{eq:App_ bij}
b_{i,j_i}(0)\not=0\,.
\end{equation} 
Since these polynomials only depend on $\delta_1$ (and $\mathcal I$), there exists a non-negative integer $\delta'_1$, which only depends on $\delta_1$ (and $\mathcal I)$, such that they can be written as 
$$
b_{i,j}(z)=:\sum_{\kappa =0}^{\delta_1'} b_{i,j,\kappa} z^\kappa\, ,
$$
where the numbers $b_{i,j,\kappa}$ belong to $\Q$. 
For every integer $\kappa$ such that $\kappa >\delta'_1$ or $\kappa<0$, we also set $b_{i,j,\kappa}:=0$.  
Now, let us consider a polynomial 
$P(\Y,z) =\sum_{j=1}^h p_j(z)\Y^{\bnu_j} \in \Q[\Y,z]_{\delta_1,\delta_2}$ and set  
$$
p_j(z) =: \sum_{\lambda =0}^{\delta_2} p_{j,\lambda} z^\lambda \, ,
$$
where the numbers $p_{j,\lambda}$ belong to $\Q$. We also set $p_{j,\lambda}:=0$ if $\lambda >\delta_2$ or $\lambda < 0$.  
By \eqref{eq:App_P2-conditionI}, $P$ belongs to $\I(\delta_1,\delta_2)$ if and only if 
\begin{equation}\label{eq:App_ gammarange}
\sum_{j=1}^h \sum_{ \kappa=0}^{\delta_1'} b_{i,j,\kappa}  p_{j,\gamma-\kappa} = 0 \, , \quad\quad \forall (\gamma,i)\,, \, 0\leq \gamma \leq  \delta_2+\delta'_1\,,\, 1\leq i\leq d\,.
\end{equation}
For each $\delta_2\geq\delta'_1$, we let $V(\delta_1,\delta_2)$ denote the $\Q$-vector subspace of the dual of $\Q[\Y,z]$ spanned 
by the linear forms
\begin{equation}\label{eq:App_ gammarange2}
L_{\gamma,i}:\sum_{j=1}^h\sum_{\lambda}p_{j,\lambda}z^\lambda\Y^{\bnu_j} \mapsto 
\sum_{j=1}^h\sum_{\kappa=0}^{\delta'_1}b_{i,j,\kappa}p_{j,\gamma-\kappa}\, ,\;\; \quad \delta'_1 \leq \gamma\leq \delta_2\,,  1\leq i\leq d\, .
\end{equation}
Since in \eqref{eq:App_ gammarange} and \eqref{eq:App_ gammarange2} the parameter $\gamma$ runs over the same interval up to a finite set that does not depend on $\delta_2$, it follows that 
\begin{equation}\label{eq:App_ comparaisonV}
\dim \I^\perp(\delta_1,\delta_2)= \dim V(\delta_1,\delta_2) + \mathcal O(1) \,,
\end{equation}
as $\delta_2$ tends to infinity. For every $\delta_2 \geq \delta_1'$, we set 
$$
c(\delta_1,\delta_2):=\dim V(\delta_1,\delta_2+1) - \dim V(\delta_1,\delta_2)\,.
$$ 
\subsubsection*{Frist claim.} We have $c(\delta_1,\delta_2)\geq 1$ for all $\delta_2\geq \delta_1$.

\begin{proof}[Proof of the first claim]
	Let us observe that, for every $(\gamma,\lambda,i,j)$, we have 
	\begin{equation}\label{eq:App_ Lgi}
	L_{\gamma,i}(z^\lambda\Y^{\bnu_j})=b_{i,j,\gamma-\lambda}\,.
	\end{equation}
	Hence $L_{\gamma,i}(z^{\delta_2+1}\Y^{\bnu_j})=0$ for all $\gamma\leq \delta_2$ and all $(i,j)$, while for every $i$, $1\leq i\leq d$, we infer from \eqref{eq:App_ Lgi} and \eqref{eq:App_ bij} that 
	$L_{\delta_2+1,i}(z^{\delta_2+1}\Y^{\bnu_{j_i}})=b_{i,j_i,0}=b_{i,j_i}(0)\not=0$. 
	We deduce that $L_{\delta_2+1,i}$ does not belong to $V(\delta_1,\delta_2)$ and hence $c(\delta_1,\delta_2)\geq 1$. 
\end{proof}

\subsubsection*{Second claim.} The sequence $(c(\delta_1,\delta_2))_{\delta_2\geq \delta'_1}$ is non-increasing. 

\begin{proof}[Proof of the second claim]
	Note that, by definition, $c(\delta_1,\delta_2)$ is equal to the number of linear forms in $\{L_{\delta_2+1,i} : 1\leq i\leq d\}$ which 
	are linearly independent over $V(\delta_1,\delta_2)$. Let us assume that some of these linear forms, say 
	$L_{\delta_2+1,i_1},\ldots,L_{\delta_2+1,i_t}$ are linearly dependent over $V(\delta_1,\delta_2)$, that is 
	$$
	\sum_{k=1}^t\theta_kL_{\delta_2+1,i_k}\in V(\delta_1,\delta_2)\,,
	$$
	where $\theta_1,\ldots,\theta_t$ are algebraic numbers, not all zero. Then we are going to show that we also have 
	\begin{equation}\label{eq:App_ claim2}
	\sum_{k=1}^t\theta_kL_{\delta_2+2,i_k}\in V(\delta_1,\delta_2+1)\,.
	\end{equation}
	The second claim follows directly from \eqref{eq:App_ claim2}. 
	
	By assumption, we can write 
	$$
	\sum_{k=1}^t\theta_kL_{\delta_2+1,i_k} = \sum_{\substack{\delta'_1\leq \gamma \leq \delta_2 \\1\leq i\leq d}} 
	\eta_{\gamma,i} L_{\gamma,i} \,,
	$$
	where the $\eta_{\gamma,i}$ are algebraic numbers. 
	In order to prove \eqref{eq:App_ claim2}, we show more precisely that 
	$$
	\sum_{k=1}^t\theta_kL_{\delta_2+2,i_k} = \sum_{\substack{\delta'_1\leq \gamma  \leq \delta_2\\ 1\leq i\leq d}} 
	\eta_{\gamma,i} L_{\gamma+1,i} 
	$$
	or, equivalently, that 
	$$
	\sum_{k=1}^t\theta_kL_{\delta_2+2,i_k}(z^\lambda\Y^{\bnu_j})=\sum_{\substack{\delta'_1\leq \gamma  \leq \delta_2\\ 1\leq i\leq d}} 
	\eta_{\gamma,i} L_{\gamma+1,i}(z^\lambda\Y^{\bnu_j})\,,
	$$
	for all $(\lambda,j)$, $\lambda\geq 0$, $1\leq j\leq h$. 
	If $\lambda>0$, we have 
	\begin{eqnarray*}
		\sum_{k=1}^t\theta_kL_{\delta_2+2,i_k}(z^\lambda\Y^{\bnu_j}) &= &\sum_{k=1}^t \theta_k b_{i_k,j,\delta_2+2-\lambda}\\
		&=& \sum_{k=1}^t\theta_kL_{\delta_2+1,i_k}(z^{\lambda-1}\Y^{\bnu_j}) \\
		&=&    \sum_{\substack{\delta'_1\leq \gamma  \leq \delta_2\\ 1\leq i\leq d}} 
		\eta_{\gamma,i} L_{\gamma,i}(z^{\lambda-1}\Y^{\bnu_j}) \\
		&=&   \sum_{\substack{\delta'_1\leq \gamma  \leq \delta_2\\ 1\leq i\leq d}} \eta_{\gamma,i}  b_{i,j,\gamma+1-\lambda}\\
		&=&   \sum_{\substack{\delta'_1\leq \gamma  \leq \delta_2\\ 1\leq i\leq d}} \eta_{\gamma,i}  L_{\gamma + 1,i}(z^{\lambda}\Y^{\bnu_j})\,.
	\end{eqnarray*}
	If $\lambda=0$, we have 
	$$
	\sum_{k=1}^t\theta_kL_{\delta_2+2,i_k}(\Y^{\bnu_j})= \sum_{k=1}^t \theta_k  b_{i_k,j,\delta_2+2}=0 
	$$
	and
	$$
	\sum_{\substack{\delta'_1\leq \gamma  \leq  \delta_2\\ 1\leq i\leq d}} 
	\eta_{\gamma,i} L_{\gamma+1,i}(\Y^{\bnu_j}) =  \sum_{\substack{\delta'_1\leq \gamma  \leq \delta_2\\ 1\leq i\leq d}} 
	\eta_{\gamma,i} b_{i,j,\gamma+1}=0 \,.
	$$
	This ends the proof of the second claim. 
\end{proof}

\subsubsection*{End of the proof  of Lemma \ref{lem:P2-dimensionespace}} 
It follows from the two claims  that the sequence $(c(\delta_1,\delta_2))_{\delta_2\geq \delta'_1}$ is eventually constant. Let $c(\delta_1)\geq 1$ denote its limit. Then $c(\delta_1)$ is an integer which does not depend on $\delta_2$ and we have 
$$
\dim V(\delta_1,\delta_2) \sim c(\delta_1)\delta_2  \,, \mbox{ as $\delta_2$ tends to infinity. }
$$
By \eqref{eq:App_ comparaisonV}, we deduce that
$$
\dim \I^{\perp}(\delta_1,\delta_2) \sim c(\delta_1)\delta_2  \,, \mbox{ as $\delta_2$ tends to infinity,}
$$
as wanted. 
\end{proof}

\subsection{Height computations}\label{app:height}

In Section \ref{subsec:klemma},  we obtained an upper bound for the logarithmic Weil height of the algebraic numbers $P_{v_0}(A_k(\alpha),\alpha^{q^k})$ during Step (LB). 
To that end, we claimed that the height of the coordinates of $A_k(\alpha)$ belong to $\mathcal O(q^k)$ as $k$ tends to infinity. We provide here a proof of this claim. 

We first recall the definition of the (absolute) logarithmic Weil height of an algebraic number.  
Let $\bf k$ be a number field. The absolute logarithmic height of a projective point 
$(\alpha_0:\cdots :\alpha_n)\in \mathbb P_n(\bf k)$ is defined by 
$$
h(\alpha_0:\cdots :\alpha_n) := \frac{1}{[\bf k:\mathbb Q]}\sum_{\nu \in M_{\bf k}}d_{\nu}\log \max 
\{\vert \alpha_0\vert_{\nu},\ldots,\vert \alpha_n\vert_{\nu}\} \, ,
$$
where $\nu$ runs over a complete set $M_{\bf k}$ of non-equivalent places of $\bf k$, 
$d_{\nu}=[{\bf k}_{\nu}:\mathbb Q_p]$\footnote{Here we let ${\bf k}_{\nu}$ 
	denote the completion of ${\bf k}$ with respect to $\nu$ and $\nu\vert_{\mathbb Q}=p$, with the convention that if 
	$\nu\vert_{\mathbb Q}=\infty$ then $\mathbb Q_p=\mathbb R$ and ${\bf k}_{\nu}$ is either 
	$\mathbb R$ if the place is real or $\mathbb C$ if the place is complex.},  and where the absolute values 
$\vert\cdot\vert_{\nu}$ are normalized so that the product formula holds: 
$$
\prod_{\nu \in M_{\bf k}} \vert x\vert_{\nu}^{d_{\nu}} = 1\,, \quad\quad \forall x\not=0\in {\bf k} \, .
$$
We also set $h(\alpha):=h(1:\alpha)$ for all $\alpha\in\bf k$, so that $h(0)=0$. 
We recall that the height of an algebraic number $\alpha$ 
does not depend on the choice of the number field 
${\bf k}$ containing it. Let $\mathcal S$ be a finite set of places of ${\bf k}$ containing the Archimedean ones -- that is the places such that
$\nu_{\vert \mathbb Q} = \infty$. 
An element $\alpha \in {\bf k}$ such that $\vert \alpha \vert_\nu \leq 1$ 
for every $\nu \in M_{\bf k}\setminus \mathcal S$ is called an \textit{$\mathcal S$-integer}. We recall that $\mathcal S$-integers form a ring and that for every $\mathcal S$-integer $\alpha$ one has
\begin{equation}\label{eq: heightS}
h(\alpha)=\frac{1}{[\bf k:\mathbb Q]}\sum_{\nu \in \mathcal S}d_{\nu}\log \max 
\{1,\vert \alpha \vert_{\nu}\} \,.
\end{equation}

For every $k\geq 0$, we set 
$A_k(z)=:(a_{i,j}^{(k)}(z))_{1\leq i,j\leq m}$, so that $A(z)=(a_{i,j}^{(1)}(z))_{1\leq i,j\leq m}$.

\begin{lem}
	\label{lem: hauteurAk}
	Let $\bf k$ denote the number field generated over $\mathbb Q$ by $\alpha$ and the coefficients of the rational functions that form the coordinates of  $A(z)$. Then there  exists a positive real number $\gamma$ such that for every integers $k$, $i$, and $j$, $k\geq 0$, $1\leq i,j\leq m$, $a_{i,j}^{(k)}(\alpha)$ belongs to $\bf k$ and 
	$h(a_{i,j}^{(k)}(\alpha))\leq \gamma q^k$. 
\end{lem}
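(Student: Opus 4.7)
The plan is to argue by induction on $k$, exploiting the multiplicative recurrence $A_{k+1}(z)=A_k(z)A(z^{q^k})$ and being careful to avoid an exponential-in-$k$ blow-up that the naive height inequalities would produce.

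As a preliminary step, I would fix for each coordinate $a_{l,j}(z)$ of $A(z)$ a representation as a quotient $P_{l,j}(z)/Q_{l,j}(z)$ of polynomials with coefficients in $\mathbf{k}$, then use the basic height inequalities recalled in \eqref{eq: height} (applied to the expansions $P_{l,j}(\alpha^{q^k})=\sum_s p_s \alpha^{s q^k}$ and similarly for $Q_{l,j}$) to obtain that each $a_{l,j}(\alpha^{q^k})$ lies in $\mathbf{k}$ with
$$
h\!\left(a_{l,j}(\alpha^{q^k})\right)\;\leq\; C\, q^{k},
$$
for a constant $C>0$ that depends only on $A(z)$ and $\alpha$. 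In particular, by the recurrence $A_{k+1}(\alpha)=A_k(\alpha)A(\alpha^{q^k})$, every $a_{i,j}^{(k)}(\alpha)$ belongs to $\mathbf{k}$ by an immediate induction.

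The main obstacle is the bound on the height itself. The naive estimate $h(\sum_{l=1}^m \beta_l\gamma_l)\leq m\max_l h(\beta_l)+m\max_l h(\gamma_l)+\log m$ applied to $a_{i,j}^{(k+1)}(\alpha)=\sum_l a_{i,l}^{(k)}(\alpha)\,a_{l,j}(\alpha^{q^k})$ yields a recurrence $H_{k+1}\leq m H_k+O(q^k)$, hence $H_k=O(m^k)$; this is useless when $m>q$. To sidestep this, I would pass to a place-by-place analysis. For every place $\nu$ of $\mathbf{k}$, set
$$
\mu_\nu^{(k)}\;:=\;\log\max\Bigl(1,\;\max_{i,j}|a_{i,j}^{(k)}(\alpha)|_\nu\Bigr),
\qquad
\eta_\nu^{(k)}\;:=\;\log\max\Bigl(1,\;\max_{l,j}|a_{l,j}(\alpha^{q^k})|_\nu\Bigr).
$$
Expanding the matrix product and using $|\sum|_\nu\leq m\max|\cdot|_\nu$ at archimedean places and the ultrametric inequality $|\sum|_\nu\leq \max|\cdot|_\nu$ at the non-archimedean ones, I would obtain
$$
\mu_\nu^{(k+1)}\;\leq\;\mu_\nu^{(k)}+\eta_\nu^{(k)}+\varepsilon_\nu\log m,
$$
with $\varepsilon_\nu=1$ for archimedean $\nu$ and $\varepsilon_\nu=0$ otherwise. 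The crucial point is that the factor $m$ appears only at the finitely many archimedean places, so weighting by $d_\nu/[\mathbf{k}:\mathbb{Q}]$ and summing contributes only a bounded term $\log m$ instead of multiplying everything by $m$.

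Summing the previous inequality against the weights $d_\nu/[\mathbf{k}:\mathbb{Q}]$, and using that $\max_{l,j}$ pulled out of the sum over $\nu$ is dominated by $\sum_{l,j}$, I would get
$$
\widetilde H^{(k+1)}\;\leq\;\widetilde H^{(k)}+\sum_{l,j}h\!\left(a_{l,j}(\alpha^{q^k})\right)+\log m
\;\leq\;\widetilde H^{(k)}+m^2 C\,q^{k}+\log m,
$$
where $\widetilde H^{(k)}:=\sum_\nu \frac{d_\nu}{[\mathbf{k}:\mathbb{Q}]}\mu_\nu^{(k)}$ is an upper bound for $h(a_{i,j}^{(k)}(\alpha))$ for every $(i,j)$. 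Iterating this linear recurrence yields $\widetilde H^{(k)}\leq k\log m+\widetilde H^{(0)}+m^2 C\,(q^{k}-1)/(q-1)$, and since $q\geq 2$ we conclude $\widetilde H^{(k)}=O(q^k)$, which gives the announced bound $h(a_{i,j}^{(k)}(\alpha))\leq\gamma q^k$.
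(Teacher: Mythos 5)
Your proof is correct, and it reaches the same conclusion by a route that is close in spirit to the paper's but differently organized. The shared key idea is a local (place-by-place) analysis to defeat the naive bound $H_{k+1}\leq mH_k+\mathcal O(q^k)$. The paper does this by first reducing to the case of polynomial coefficients, then picking a finite set $\mathcal S$ of places such that $\alpha$ and the coefficients of $A(z)$ are $\mathcal S$-integers (so that one only needs to control finitely many absolute values), and finally running the induction $|a_{i,j}^{(k)}(\alpha)|_\nu\leq e^{\gamma_2 q^k}$ at each $\nu\in\mathcal S$ separately, with the $\log m$ absorbed into the constant $\gamma_2$ using $q^{k+1}-q^k\geq q^k$; the rational-function case is then handled by clearing denominators with $b_k(z)=b(z)b(z^q)\cdots b(z^{q^{k-1}})$. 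You instead keep all places, observe that the archimedean/non-archimedean split confines the $\log m$ loss to the archimedean places whose total weight is $1$, and sum the place-wise inequality $\mu_\nu^{(k+1)}\leq\mu_\nu^{(k)}+\eta_\nu^{(k)}+\varepsilon_\nu\log m$ against the weights $d_\nu/[\mathbf k:\mathbb Q]$ to obtain a global linear recurrence $\widetilde H^{(k+1)}\leq\widetilde H^{(k)}+m^2Cq^k+\log m$, which iterates to $\mathcal O(q^k)$ since $k=\mathcal O(q^k)$. The practical trade-off: the paper's $\mathcal S$-integer argument keeps each step entirely within a finite set of places and is perhaps more elementary, at the cost of a two-stage reduction (polynomial, then rational); your version treats the rational-function case uniformly in one pass, at the cost of manipulating an infinite weighted sum over all places (harmless, since for each fixed $k$ only finitely many places contribute). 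Both are valid and comparable in length.
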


\begin{proof}
	A straightforward induction using the relation 
	\begin{equation}\label{eq: recAk}
	A_{k+1}(z)=A_k(z)A(z^{q^k})
	\end{equation}
	shows that all the coefficients $a_{i,j}^{(k)}(\alpha)$ belong to $\bf k$. 
	
	We  first assume  that $A(z)$ has coefficients in $\Q[z]$. There exists a finite set of places $\mathcal S$ of $\bf k$ (containing the Archimedean  ones) such that 
	all the coefficients of all the coordinates of $A(z)$ and $\alpha$ are $\mathcal S$-integers. Then, it follows from \eqref{eq: recAk} 
	that all the numbers $a_{i,j}^{(k)}(\alpha)$ are also $\mathcal S$-integers (since the latter form a ring).
	Let $\nu$ be an element of $\mathcal S$.  
	There exists a positive real number $\gamma_1(\nu)\geq 1$ that does not depend on $k$ such that 
	\begin{equation*}
	\vert a_{i,j}^{(1)}(\alpha^{q^k})\vert_{\nu} \leq e^{\gamma_1(\nu) q^k} \,, \quad\quad \forall k\geq 0\,, 1\leq i,j\leq m\,.
	\end{equation*}
	Set $\gamma_1:=\max\{\gamma_1(\nu) : \nu \in \mathcal S\}$. 
	
	Let us prove by induction on $k$ that for all $\nu\in\mathcal S$: 
	\begin{equation}\label{eq: recak}
	\vert a_{i,j}^{(k)}(\alpha) \vert_\nu \leq e^{\gamma_2q^k} \,, \quad\quad \forall k\geq 0, 1\leq i,j\leq m\,, 
	\end{equation}
	where $\gamma_2:= \gamma_1 +\log m$. 
	The result is trivial for $k=0$, since $A_0(z)={\rm I}_m$. 
	Let us assume that the result holds for some $k\geq 0$. 
	We infer from \eqref{eq: recAk} that 
	$$
	a_{i,j}^{(k+1)}(\alpha)= \sum_{\ell=1}^ma_{i,\ell}^{(k)}(\alpha)a_{\ell,j}^{(1)}(\alpha^{q^k}) \,.
	$$
	It follows that 
	\begin{eqnarray*}
		\vert a_{i,j}^{(k+1)}(\alpha)\vert_{\nu} &\leq& \sum_{\ell=1}^m\vert a_{i,\ell}^{(k)}(\alpha)\vert_\nu\,\vert a_{\ell,j}^{(1)}(\alpha^{q^k})\vert_{\nu}\\
		&\leq & me^{(\gamma_1+\gamma_2)q^k}\\
		&\leq &e^{\gamma_2q^{k+1}}\,.
	\end{eqnarray*}
	This proves \eqref{eq: recak}. We deduce from \eqref{eq: heightS} that 
	\begin{eqnarray*}
		h(a_{i,j}^{(k)}(\alpha))&=& \frac{1}{[\bf k: \mathbb Q]}\sum_{\nu\in\mathcal S}d_\nu\log \max\{1, \vert a_{i,j}^{(k)}(\alpha)\vert_{\nu} \}\\
		&\leq & \gamma_3 q^k \,,
	\end{eqnarray*}
	where $\gamma_3:= \frac{\gamma_2}{[\bf k: \mathbb Q]} \sum_{\nu\in\mathcal S}d_\nu$. Taking $\gamma:=\gamma_3$, this proves the lemma when $A(z)$ has polynomial coefficients.
	
	We prove now the general case where the coordinates of $A(z)$ are rational functions.
		Let $b(z) \in\Q[z]$ denote the least common multiple 
	of the denominators of the coordinates of $A(z)$. Hence the matrix $b(z)A(z)$ 
	has coefficients in $\Q[z]$. For every $k \geq 0$, we set 
	\begin{equation}\label{eq: bk}
	b_k(z):=b(z)b(z^q)\cdots b(z^{q^{k-1}})\, ,
	\end{equation}
	so that $b_0(z)=1$, $b_1(z)=b(z)$, and the matrix $b_k(z)A_k(z)$ has coefficients in $\Q[z]$. 
	It follows from the first part of the proof that the height of the coordinates of the matrix $b_k(\alpha)A_k(\alpha)$ is at most $\gamma_4 q^k$, for some positive real 
	number $\gamma_4$ which does not depend on $k$.  Using \eqref{eq: height} and \eqref{eq: height3},  we obtain that $h(b(\alpha^{q^k})) = \mathcal O(q^k)$ as $k$ tends to infinity.  By \eqref{eq: height}, we deduce that there exists a real number $\gamma_5>0$ such that
	$$
	h(b_k(\alpha)) \leq \gamma_5q^k, \quad\quad  \forall k\geq 0\,.
	$$
	Finally,  \eqref{eq: height} implies that 
	$$
	h(a_{i,j}^{(k)}(\alpha)) \leq (\gamma_4+\gamma_5)q^k
	$$
	and the lemma holds with $\gamma:=\gamma_4+\gamma_5$.
\end{proof}

\subsection{Kronecker product}\label{app:kronecker}

For the sake of completeness, we provide a proof of the basic properties that we used about Kronecker product in Section \ref{sec:P2-Kroneck}.

\begin{lem}\label{lem:P2-proprieteKronecker} The following properties hold. 
	
	\medskip 
	\begin{enumerate}[label=\roman*)]
		\item[{\rm (i)}]    If $A$,$B$, $C$, and $D$ are matrices such that the products $AB$ and 
		$CD$ are well-defined, then  
		$
		(AB)\otimes(CD)=(A \otimes C)(B \otimes D)$. 
		
		\medskip
		
		\item[{\rm (ii)}] If $A$ and $B$ are two square matrices respectively of size $m$ and $n$, then 
		$$\det(A\otimes B)=(\det A)^n(\det B)^m\,.$$ In particular, if $A$ is a square matrix of size $m$ and $d\geq 1$ is an integer, then
		$$
		\det A^{\otimes d} = (\det A)^{md}\, .
		$$
	\end{enumerate}
\end{lem}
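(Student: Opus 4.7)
The plan is to prove part (i) by a direct entrywise computation and then deduce part (ii) from (i) via the factorization $A\otimes B = (A\otimes I_n)(I_m\otimes B)$, whose two factors have determinants that are easily read off.

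For (i), suppose $A$ is of size $m\times p$, $B$ of size $p\times q$, $C$ of size $r\times s$, and $D$ of size $s\times t$, so both sides of the claimed identity are $mr\times qt$ matrices whose rows and columns are naturally indexed by pairs. The $((i,k),(j,\ell))$-entry of $(AB)\otimes(CD)$ is
\[
(AB)_{i,j}\,(CD)_{k,\ell} \;=\; \sum_{u,v} a_{i,u}b_{u,j}c_{k,v}d_{v,\ell},
\]
and expanding the matrix product $(A\otimes C)(B\otimes D)$ in the same pair indexing yields
\[
\sum_{u,v}(A\otimes C)_{(i,k),(u,v)}(B\otimes D)_{(u,v),(j,\ell)} \;=\; \sum_{u,v}a_{i,u}c_{k,v}\cdot b_{u,j}d_{v,\ell},
\]
which agrees with the previous expression. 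This establishes (i).

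For (ii), applying (i) to the trivial products $A = A\cdot I_m$ and $B = I_n\cdot B$ gives
\[
A\otimes B \;=\; (A\otimes I_n)(I_m\otimes B).
\]
The matrix $I_m\otimes B$ is block diagonal with $m$ diagonal blocks each equal to $B$, hence $\det(I_m\otimes B)=(\det B)^m$. To handle $A\otimes I_n$, I would introduce the perfect-shuffle permutation matrix $P$ swapping the two tensor indices, so that $P^{-1}(A\otimes I_n)P = I_n\otimes A$; since conjugation preserves the determinant and $I_n\otimes A$ is block diagonal with $n$ blocks equal to $A$, we obtain $\det(A\otimes I_n)=(\det A)^n$. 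Multiplying the two determinants yields the claim $\det(A\otimes B)=(\det A)^n(\det B)^m$. The statement about $A^{\otimes d}$ then follows by a straightforward induction on $d$: writing $A^{\otimes d}=A^{\otimes(d-1)}\otimes A$ and applying the displayed determinant formula gives a simple linear recursion in the exponent of $\det A$ that closes the induction.

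The only point calling for a bit of care is the explicit identification of the permutation conjugating $A\otimes I_n$ into $I_n\otimes A$, but this is a routine reindexing and is not a genuine obstacle. Once (i) is in hand, part (ii) is essentially forced, so the whole lemma amounts to careful bookkeeping with block structures.
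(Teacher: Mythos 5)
Your proof takes essentially the same approach as the paper's: part (i) is the same direct expansion of the Kronecker product (you work entrywise with pair-indexed entries where the paper works with the block structure), and part (ii) uses the identical factorization $A\otimes B = (A\otimes I_n)(I_m\otimes B)$ followed by a permutation-conjugation to reduce $\det(A\otimes I_n)$ to the block-diagonal case $\det(I_n\otimes A)$, exactly as in the paper. Both arguments are correct, so there is nothing to add.
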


\begin{proof}
	Let us first prove (i). Set $A:=(a_{i,j})_{i,j}$ and $B:=(b_{j,k})_{j,k}$. Then we have the block matrix decompositions
	$$
	A\otimes C = (a_{i,j}C)_{i,j}, \ \text{ and } \ B\otimes D = (b_{j,k}D)_{j,k}\,.
	$$
	According to this block decomposition, the $(i,k)$th block of $(A \otimes C)(B \otimes D)$ is
	$$
	\sum_j a_{i,j}b_{j,k}CD\, .
	$$
	In the mean time, $AB=(\sum_j a_{i,j}b_{j,k})_{i,k}$ and thus, the $(i,k)$th block of $(AB)\otimes(CD)$ is precisely 
	$\sum_j a_{i,j}b_{j,k}CD$.
	
	Now, let us prove (ii). 
	Consider the matrix $A \otimes {\rm I}_n$. After a suitable permutation of the rows and the columns, 
	one obtains the matrix ${\rm I}_n \otimes A$, which is a block diagonal matrix. Hence
	$$
	\det(A \otimes {\rm I}_n)=\det({\rm I}_n \otimes A) = (\det A)^n\,.
	$$
	Similarly, $\det({\rm I}_m \otimes B)= (\det B)^m$. We infer from (i) that  
	$$
	A\otimes B= (A{\rm I}_m)\otimes({\rm I}_nB)=(A \otimes {\rm I}_n)({\rm I}_m \otimes B)\, .
	$$
	Hence 
	\begin{equation*}
	\det (A\otimes B) = \det({\rm I}_n \otimes A)\det({\rm I}_m \otimes B)=(\det A)^n(\det B)^m
	\end{equation*}
	as wanted. The last property follows now  by induction on $d$. 
\end{proof}

\subsection{Hilbert's theorem about transcendence degree}\label{app:Hilbert}

In Section \ref{sec:P2-Kroneck}, 
we use a result of Hilbert to deduce Theorem \ref{thm:P2-Nishioka} from Theorem \ref{thm:P2-Philippon}.  
Let $\mathbb K$ be a field, $\mathbb L$ be a field extension of $\mathbb K$, $\xi_1,\ldots,\xi_m \in \mathbb L$, and $\varphi(d)$ denote the dimension of the 
$\mathbb K$-vector space formed by the polynomials in $\xi_1,\ldots,\xi_m$ of total degree at most $d$. Hilbert's theorem states that $\varphi(d)$ is a polynomial with degree 
$t:={\rm tr.deg}_{\mathbb K}(\xi_1,\ldots,\xi_m)$ for all $d$ large enough.  
As mentionned in Remark \ref{rem:Hilbert}, we do not need the full strength of Hilbert's theorem.  
The following elementary lemma is indeed sufficient to deduce Theorem \ref{thm:P2-Nishioka}.

\begin{lem}\label{lem:P2-HilbertSerre}
	We continue with the notation above. There exist two positive real numbers $\gamma_1$ and $\gamma_2$ that does not depend on $d$ and such that
	$$
	\gamma_1 d^{t} \leq \varphi(d) \leq \gamma_2 d^t,\quad \forall d\geq 1 \,.
	$$
\end{lem}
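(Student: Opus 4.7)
The plan is to handle the two inequalities separately: the lower bound reduces to counting monomials in a transcendence basis, while the upper bound will be established by induction on $m$ via a uniform denominator-clearing trick that absorbs the algebraic dependence relations.

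For the lower bound, after relabelling I would assume that $\xi_1,\ldots,\xi_t$ form a transcendence basis of $\mathbb K(\xi_1,\ldots,\xi_m)$ over $\mathbb K$. The $\binom{d+t}{t}$ monomials $\xi_1^{a_1}\cdots\xi_t^{a_t}$ with $a_1+\cdots+a_t\leq d$ are then $\mathbb K$-linearly independent, since the evaluation map $\mathbb K[X_1,\ldots,X_t]\to\mathbb L$, $X_i\mapsto \xi_i$, is injective. As $\binom{d+t}{t}\geq d^t/t!$ for every $d\geq 1$, this already yields $\varphi(d)\geq \gamma_1 d^t$ with $\gamma_1:=1/t!$.

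For the upper bound, I would argue by induction on $m\geq t$, keeping $t$ fixed. The base case $m=t$ is immediate since $\varphi(d)=\binom{d+t}{t}\leq \gamma d^t$ for a suitable constant $\gamma$ depending only on $t$. For the inductive step $m>t$, reorder so that $\xi_1,\ldots,\xi_t$ is a transcendence basis, and pick a nonzero polynomial $P\in\mathbb K[X_1,\ldots,X_t,Y]$ of positive degree $n$ in $Y$ such that $P(\xi_1,\ldots,\xi_t,\xi_m)=0$; let $c(X_1,\ldots,X_t)$ denote its leading coefficient in $Y$. By algebraic independence of $\xi_1,\ldots,\xi_t$, the element $c:=c(\xi_1,\ldots,\xi_t)$ is nonzero in $R:=\mathbb K[\xi_1,\ldots,\xi_m]$, and setting $\eta:=c\,\xi_m$ a routine manipulation yields a monic integral relation $\eta^n=\sum_{k<n}e_k\,\eta^k$ with $e_k\in S:=\mathbb K[\xi_1,\ldots,\xi_t]$ of bounded degree (in terms of $P$).

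The key step is then to multiply every monomial of degree at most $d$ by the same power $c^d$ in order to clear denominators uniformly. For a monomial $\xi^a=\xi_1^{a_1}\cdots\xi_m^{a_m}$ with $|a|\leq d$, one has
$$c^d\,\xi^a\,=\,c^{d-a_m}\,\xi_1^{a_1}\cdots\xi_{m-1}^{a_{m-1}}\,\eta^{a_m}\,\in\,R_{m-1}[\eta]\,,$$
where $R_{m-1}:=\mathbb K[\xi_1,\ldots,\xi_{m-1}]$. Iterating the integral relation at most $d$ times to reduce $\eta^{a_m}$ to $\eta$-degree strictly less than $n$ produces an expression $c^d\,\xi^a=\sum_{k<n}s_k\,\eta^k$ with each $s_k\in R_{m-1}$ of total degree at most $\beta d$, for some constant $\beta$ independent of $a$ and $d$. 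Hence multiplication by $c^d$, which is injective on the domain $R$ since $c\neq 0$, sends $R_{\leq d}$ into the $\mathbb K$-vector subspace $\sum_{k<n}(R_{m-1})_{\leq \beta d}\cdot\eta^k$, whose dimension is at most $n\cdot\dim_{\mathbb K}(R_{m-1})_{\leq \beta d}$. Since $\xi_1,\ldots,\xi_{m-1}$ still has transcendence degree $t$ over $\mathbb K$ (as $\xi_m$ is algebraic over $\mathbb K(\xi_1,\ldots,\xi_t)$), the induction hypothesis yields $\dim_{\mathbb K}(R_{m-1})_{\leq \beta d}\leq \gamma' d^t$, and therefore $\varphi(d)\leq n\gamma'\beta^t\,d^t=:\gamma_2 d^t$, as required.

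The main obstacle in this argument is the control of denominators arising from algebraic dependence. Passing naively to the fraction field $\mathbb K(\xi_1,\ldots,\xi_t)$ would introduce denominators whose degrees depend on the exponents $a_i$, and could blow up the total degree beyond $O(d)$, ruining the inductive step; the trick of multiplying \emph{every} monomial of degree $\leq d$ by the \emph{same} power $c^d$ clears them all simultaneously while inflating total degrees only linearly in $d$, which is exactly what is needed to preserve the exponent $t$ across the induction.
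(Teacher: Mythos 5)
Your proposal is correct and follows essentially the same strategy as the paper: the lower bound by counting monomials in a transcendence basis, and the upper bound by induction on $m$, multiplying the whole space of degree-$\leq d$ polynomials by a $d$-th power of a fixed denominator-clearing polynomial to land in a bounded-degree subspace of $R_{m-1}[\eta]$ with $\eta$ integral over $R_{m-1}$. The only cosmetic difference is that the paper first treats the case where $\xi_m$ is already integral over $\mathbb K[\xi_1,\ldots,\xi_{m-1}]$ and then reduces the general case to it, whereas you fold the two sub-cases together by directly working with $\eta = c\,\xi_m$; the underlying trick (uniform multiplication by $c^d$, injectivity in an integral domain, induction on $m$) is identical.
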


\begin{proof} Suppose first that $t=0$. Then, $\mathbb K(\xi_1,\ldots,\xi_m)$ is a finitely generated extension of $\mathbb K$ which is algebraic over $\mathbb K$. It follows that it has finite degree, say $\delta$, over $\mathbb K$. Then, for any $d$,
	$$
	1 \leq \varphi(d) \leq \delta\,.
	$$
	Taking $\gamma_1=1$ and $\gamma_2=\delta$, this proves the lemma when $t=0$. Suppose now that $t \geq 1$. Without any loss of generality, we can assume that $\xi_1,\ldots,\xi_t$ are algebraically independent over $\mathbb K$. 
	
	We first prove the lower bound. By assumption, all the monomials in $\xi_1,\ldots,\xi_t$ are linearly independent over $\mathbb K$. 
	Since there are $\binom{d+t}{t}$ distinct monomials in $\xi_1,\ldots,\xi_t$ of degree at most $d$, we obtain that 
	$$
	\varphi(d) \geq \binom{d+t}{t} \geq \frac{d^t}{t!}\,\cdot
	$$
	Taking $\gamma_1 = (t!)^{-1}$, we obtain the expected lower bound. 
	
	Now,  let us prove the upper bound by induction on $m\geq t$, the integer $t$ being fixed. Suppose that $m=t$, then
	$$
	\varphi(d)=\binom{d+t}{t} \leq (t+1)d^t\, .
	$$
	We assume now that $m>t$ and that the upper bound holds when considering $m-1$ elements in $\mathbb L$. 
	By assumption, $\xi_m$ is algebraic over $\mathbb K(\xi_1,\ldots,\xi_{m-1})$. 
	We first prove the case where $\xi_m$ is integer over $\mathbb K[\xi_1,\ldots,\xi_{m-1}]$. In that case, there exist an integer $\delta \geq 1$ and polynomials $P_{0},\ldots,P_{\delta-1} \in \mathbb K[X_1,\ldots,X_{m-1}]$ such that
	$$
	\xi_m^\delta =\sum_{i=0}^{\delta - 1} P_i(\xi_1,\ldots,\xi_{m-1})\xi_m^i\, .
	$$
	Let $d_0$ denote the maximum of the degree of $P_0,\ldots,P_{\delta-1}$. Let $d \geq \delta$, then any polynomial of 
	degree at most  $d$ in $\xi_1,\ldots,\xi_m$ can be decomposed as
	$$
	Q_0(\xi_1,\ldots,\xi_{m-1})+Q_1(\xi_1,\ldots,\xi_{m-1})\xi_m+\cdots+Q_{\delta-1}(\xi_1,\ldots,\xi_{m-1})\xi_m^{\delta-1}\, ,
	$$
	where $Q_1,\ldots,Q_{\delta-1}$ are polynomials of degree at most $(1+d_0)d$. By induction, there exists a positive number 
	$\gamma$ such that there are at most 
	$$
	\gamma ((1+d_0)d)^t
	$$
	linearly independent polynomials in $\xi_1,\ldots,\xi_{m-1}$ whose degree is at most $(1+d_0)d$. 
	Setting $\gamma_2:=\delta \gamma (1+d_0)^t$, we obtain the expected upper bound when 
	$\xi_m$ is integer over $\mathbb K[\xi_1,\ldots,\xi_{m-1}]$. 
	
	In the general case, there exists a polynomial $p \in \mathbb K[X_1,\ldots,X_{m-1}]$ such that $p(\xi_1,\ldots,\xi_{m-1})\xi_m$ is 
	integer over $\mathbb K[\xi_1,\ldots,\xi_{m-1}]$. Let $d_1$ denote the degree of $p$. Then the multiplication by $p(\xi_1,\ldots,\xi_{m-1})^d$ induces an injection between the vector space of polynomials of degree at most $d$ in $\xi_1,\ldots,\xi_m$ and the vector space of polynomials of degree at most $d(d_1+1)$ in $\xi_1,\ldots,\xi_{m-1},p(\xi_1,\ldots,\xi_{m-1})\xi_m$. 
	The dimension of the latter is at most
	$$
	\gamma(d_1+1)^t d^t 
	$$
	for some $\gamma>0$, since $p(\xi_1,\ldots,\xi_{m-1})\xi_m$ is integer over $\mathbb K[\xi_1,\ldots,\xi_{m-1}]$. 
	Setting $\gamma_2:= \gamma(d_1+1)^t$, we obtain the expected upper bound. 
\end{proof}


\end{document}